\documentclass[11pt, oneside]{amsart}
\usepackage{amstext}
\usepackage{amsmath, amssymb, amsthm}
\usepackage{amssymb}
\usepackage{mathrsfs}
\usepackage{enumerate}
\usepackage{xcolor}
\usepackage{comment}
\usepackage{hyperref}
\hypersetup{
    colorlinks=true,    
    linkcolor=blue,     
    citecolor=blue,    
    filecolor=blue,  
    urlcolor=blue       
}
\usepackage{graphicx}
\usepackage{mathtools}
\usepackage{accents}
\mathtoolsset{showonlyrefs}
\usepackage[a4paper, total={6in, 8.5in}]{geometry}
\usepackage{setspace}

\newtheorem{theorem}{Theorem}[section]
\newtheorem{lemma}[theorem]{Lemma}
\newtheorem{proposition}[theorem]{Proposition}
\newtheorem{corollary}[theorem]{Corollary}
\newtheorem{remark}[theorem]{Remark}

\DeclareMathOperator{\tr}{tr}

\newcommand{\circo}{\accentset{\circ}}
\newcommand\simtimes{\mathbin{%
    \stackrel{\sim}{\smash{\times}\rule{0pt}{0.8ex}}%
    }}

\title{The planarity estimate in dimensions five and six}
\author{Stephen Lynch}
\address{Department of Mathematics, King's College London, Strand, London, WC2R 2LS, UK}
\email{stephen.lynch@kcl.ac.uk}
\date{}

\begin{document}

\begin{abstract}
Naff's planarity estimate is a crucial tool for analysing singularities of quadratically pinched mean curvature flows in high codimension, showing that high-curvature regions become asymptotically codimension one. Naff established the estimate throughout the full Andrews--Baker pinching range in dimensions at least $7$, while in dimensions $5$ and $6$ a stronger pinching assumption was required. We close this remaining gap, proving the planarity estimate in dimensions $5$ and $6$ throughout the full Andrews--Baker range. The key new ingredient is a sharp Kato-type inequality relating the twisting of the principal normal to the gradient of the second fundamental form. 
\end{abstract}

\maketitle

\medskip

\noindent\textbf{AI usage statement.}
This project was developed with substantial assistance from Claude Fable 5.1. Claude found the sharp Kato-type inequality (see equation \eqref{sharp Bochner} below), which is the key new ingredient underlying the main results of the paper, and supplied an initial proof based on constrained optimisation. The author independently verified the inequality. The proof presented here was subsequently developed, in part through further discussion with Claude, with the aim of finding a more geometric argument. Claude also observed that the sharp Kato-type inequality is strong enough to remove the restriction in Naff's planarity estimate in dimensions $5$ and $6$. The author then verified this claim, carried out the remaining analysis, and developed the applications and interpretation of the result. Claude and ChatGPT were also used for mathematical discussion, checking, and editorial feedback. The author takes full responsibility for the final manuscript.

\section{Introduction}

A profound problem in geometric analysis is to understand the structure of singularities formed by the mean curvature flow. Many remarkable developments over the last decades have led to an extensive theory in the hypersurface setting. In higher codimension, however, the behaviour of the flow can be vastly more complex, and much less is understood. This comes down to the fact that, in codimension at least 2, the flow is described by a system of PDEs rather than a single scalar equation. Consequently, embeddedness is not preserved, so any general theory must account for self-intersections. In addition, the normal curvature tensor (which vanishes for a hypersurface) gives rise to nonlinear forcing terms, entering via Simons' identity, that influence the dynamics of the local geometry in a manner which is not easily understood. 

Developing a complete theory of singularities for the high-codimension mean curvature flow is, for now, probably not tractable. However, substantial progress has been made in a natural class introduced by Andrews--Baker \cite{Andrews--Baker}. They demonstrated that, in every dimension $n \geq 2$ and codimension $m \geq 2$, the quadratic pinching condition $|A|^2 < c|H|^2$ is preserved by the flow if $c \leq \frac{4}{3n}$. The threshold $\frac{4}{3n}$ is natural in the high codimension context; it arises from sharp pointwise inequalities for the normal curvature terms appearing in the evolution equation for the second fundamental form, and probably cannot be improved.

The first major progress concerning singularities of quadratically pinched flows was made by Andrews--Baker themselves \cite{Andrews--Baker}. They developed a complete picture in dimensions $n \in \{2,3,4\}$ for the full range of pinching constants $c \leq \frac{4}{3n}$, and in dimensions $n \geq 5$ assuming $c \leq \frac{1}{n-1}$: in these settings an initially pinched submanifold contracts to a point in finite time, becoming asymptotically spherical in the process. The next breakthrough came from Naff \cite{Naff}, who showed that at points of sufficiently large curvature the evolving submanifold becomes quantitatively close to a hypersurface in a suitable $(n+1)$-plane; this is the content of his planarity estimate. For $n \geq 7$, Naff proved the estimate throughout the full range $c \leq \frac{4}{3n}$, but in dimensions $n \in \{5,6\}$ his argument required the more restrictive hypothesis $c \leq \frac{3(n+1)}{2n(n+2)}$.

Naff's planarity estimate is extremely powerful, since it allows techniques from the hypersurface theory to be imported into the high codimension setting. Its only slightly unsatisfying feature is its restrictive pinching requirement in dimensions $n \in \{5,6\}$. This of course raises the question: Does a planarity estimate in fact hold in dimensions $n \in \{5,6\}$, for the full range of preserved pinching constants $c \leq \frac{4}{3n}$? Or, alternatively, could there exist pinched flows in these dimensions, with $c$ between $\frac{3(n+1)}{2n(n+2)}$ and $\frac{4}{3n}$, which form more complicated singularities that cannot be modelled on hypersurfaces? The latter scenario would seem quite strange, but is not ruled out by anything currently known.

The objective of the present article is to fill this gap, by proving a planarity estimate which holds for the full range of quadratic pinching conditions that are preserved by the flow, that is $|A|^2 < c|H|^2$ with $c \leq \frac{4}{3n}$, in the remaining dimensions $n \in \{5,6\}$. In order to state the estimate, we recall that whenever $|H| > 0$ the tensor $A$ splits into orthogonal components $A = h \nu + A^-$, where $\nu = H/|H|$.

\begin{theorem}[Planarity Estimate]\label{main}
Fix $n \geq 5$ and $m \geq 2$. Let $M_t$ be a family of closed $n$-dimensional immersions in $\mathbb{R}^{n+m}$ evolving by the mean curvature flow. If $Q = c|H|^2 - |A|^2$ is positive at the initial time for some $c \leq \frac{4}{3n}$, then we have 
\[
\max_{M_t} \frac{|A^-|^2}{Q^{1-\sigma}} \leq \max_{M_0} \frac{|A^-|^2}{Q^{1-\sigma}}
\]
for every $\sigma \in (0,\frac{1}{30}]$.
\end{theorem}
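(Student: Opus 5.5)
We argue by the maximum principle applied to $f = |A^-|^2/Q^{1-\sigma}$, following Naff's scheme but sharpening the gradient estimate. Throughout the flow $Q>0$ (Andrews--Baker), so $|H|>0$ and the splitting $A = h\nu + A^-$ with $\nu = H/|H|$ is well defined. The first step is to record the evolution equations. For $Q$ we use the Andrews--Baker computation
\[
(\partial_t - \Delta) Q = 2(|\nabla A|^2 - c|\nabla H|^2) + 2cR_2 - 2R_1,
\]
where $R_1, R_2$ are the usual reaction terms. For $|A^-|^2$ we use Naff's computation, which has the form
\[
(\partial_t-\Delta)|A^-|^2 = -2|\hat\nabla A^-|^2 - 2|\nabla^\perp\nu|^2\text{-type terms} + (\text{reaction terms}).
\]
In these evolution equations the connection on $A^-$ involves the twisting $\nabla^\perp \nu$ of the principal normal. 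Combining the two gives, at a point where $f$ is maximal and hence $\nabla f = 0$,
\[
(\partial_t - \Delta) f \le \frac{1}{Q^{1-\sigma}}\Big( G - (1-\sigma)\frac{|A^-|^2}{Q}\,2(|\nabla A|^2 - c|\nabla H|^2) + \sigma\text{-error} \Big) + \frac{1}{Q^{1-\sigma}}\,\mathrm{Reaction}.
\]
Here $G$ collects the good gradient terms of $|A^-|^2$, and the cross term coming from $\nabla f=0$ has been absorbed as in Naff.

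We take the reaction terms directly from Naff. His pointwise estimate shows that, for $c\le \frac{4}{3n}$, they are nonpositive up to a term proportional to $\sigma|A^-|^2|H|^2$. That term is controlled once $\sigma$ is small, and this is where a numerical restriction like $\sigma\le\frac1{30}$ enters. Naff's argument works in every dimension $n\ge5$ except for the gradient terms. Those are the only place the dimension restriction arose: he needed $|\nabla A|^2 - c|\nabla H|^2$ to be dominated by the good gradient terms of $|A^-|^2$, and he did this with the standard Kato-type bound $|\nabla A|^2 \ge \frac{3}{n+2}|\nabla H|^2$. That bound forces $c \le \frac{3(n+1)}{2n(n+2)}$ when $n\in\{5,6\}$.

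The key step, and the main obstacle, is therefore to prove a sharp Kato-type inequality that bounds the twisting of the principal normal by $\nabla A$. Schematically it reads
\[
|H|^2|\nabla^\perp \nu|^2 \le C_n\big(|\nabla A|^2 - \tfrac{3}{n+2}|\nabla H|^2\big) + (\text{terms controlled by } |A^-|\,|\nabla A|),
\]
with the optimal constant $C_n$. With this in hand, the twisting contribution in the evolution of $|A^-|^2$ is absorbed by $-2(1-\sigma)\frac{|A^-|^2}{Q}(|\nabla A|^2 - c|\nabla H|^2)$ for all $c\le \frac4{3n}$.

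To prove the inequality, write $\nabla_i H = \nabla_i|H|\,\nu + |H|\nabla_i^\perp\nu$. The component $\langle\nabla_i A_{jk},e_\alpha\rangle$ with $e_\alpha\perp\nu$ has trace $|H|\langle\nabla_i^\perp\nu,e_\alpha\rangle$ and is totally symmetric by Codazzi. We decompose each such symmetric 3-tensor into its trace part and its trace-free part, exactly as in the proof of $|\nabla A|^2\ge \frac{3}{n+2}|\nabla H|^2$, but we treat the $\nu$-direction and the orthogonal directions separately. The symmetrisation of $\delta_{ij}X_k$ gives the factor $\frac{3}{n+2}$ for each normal direction. The sharp gain comes from the fact that the $\nu$-component and the $A^-$-directions are coupled only through lower-order terms involving $A^-$. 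We will minimise $|\nabla A|^2$ subject to prescribed $\nabla|H|$ and $\nabla^\perp\nu$, as a quadratic optimisation over symmetric tensors, and identify the extremal configuration to obtain $C_n$.

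Finally we check the numerics for $n=5,6$ with $c=\frac4{3n}$. The resulting coefficient of the gradient terms must be nonpositive, and it should remain so with the $\sigma$-loss for $\sigma\le\frac1{30}$. The maximum principle then gives the stated monotonicity of $\max f$. The case $n\ge7$ is Naff's original result.
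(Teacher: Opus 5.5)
Your key step fails in two ways.

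\textbf{The proposed inequality is false.} You want $|H|^2|\nabla\nu|^2 \le C_n(|\nabla A|^2 - \frac{3}{n+2}|\nabla H|^2)$ plus terms controlled by $|A^-||\nabla A|$. Take a point where $A^-=0$ (with, say, $h$ umbilic, so the point is pinched). There you may prescribe $\nabla A$ to be any normal-valued totally symmetric tensor. Choose the pure-trace tensor $\nabla_iA_{jk} = \frac{1}{n+2}(\nabla_iH\,g_{jk} + \nabla_jH\,g_{ik} + \nabla_kH\,g_{ij})$ with $\nabla H$ not parallel to $\nu$. Then the Kato inequality is an equality and the $A^-$ terms vanish, while $|H|\nabla\nu = (\nabla H)^- \neq 0$. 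For the same reason, minimising $|\nabla A|^2$ with $\nabla|H|$ and $\nabla\nu$ prescribed just returns $\frac{3}{n+2}|\nabla H|^2$ and gives no new information.

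\textbf{The absorption scheme cannot work, by homogeneity in $|A^-|$.} The twisting does not enter the evolution of $|A^-|^2$ as good ``$-|\nabla^\perp\nu|^2$-type'' terms. It enters through the cross terms $-2\langle\nabla h\otimes\nu,\nabla A^-\rangle + 2\frac{h_{ij}}{|H|}\langle\nabla|H|\otimes\nu,\nabla A^-_{ij}\rangle$. After writing $\langle\nu,\nabla A^-\rangle = -\langle\nabla\nu,A^-\rangle$, these are bounded by $2|\langle\nabla A^-,\nu\rangle|^2$, which is harmless, plus $2|A^-||U||\nabla\nu| + 2\frac{|A^-|}{|H|}|\nabla|H|||\circo{h}||\nabla\nu|$, which is only linear in $|A^-|$. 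Since $U = \nabla\circo{h} + \langle\nabla A^-,\nu\rangle$ contains $\nabla\circo{h}$ (and $\nabla|H|$ is controlled by $U$), the $|U|^2$ parts of Young's inequality can only be absorbed by $\frac{|A^-|^2}{Q}|U|^2$ from the $Q$-Bochner term. That leaves a multiple of $(\sqrt{Q} + \sqrt{\tfrac{n+2}{6(n-1)}}|\circo{h}|)^2|\nabla\nu|^2$ with no factor of $|A^-|^2$. Every contribution of $(1-\sigma)\frac{|A^-|^2}{Q}(|\nabla A|^2 - c|\nabla H|^2)$ carries the factor $|A^-|^2/Q$, and nothing prevents this from being small at a maximum of $P$. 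So no estimate routed through that term can close the argument.

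\textbf{The missing idea.} This leftover must be absorbed by a good term of the $|A^-|^2$ equation itself, namely the normal part of $|\nabla A^-|^2 = |\langle\nabla A^-,\nu\rangle|^2 + |(\nabla A^-)^-|^2$. The sharp Kato-type inequality needed is
\[
|(\nabla A^-)^-|^2 \ge \frac{2n}{(n+2)(n-1)}\sum_i(|H|-\lambda_i)^2|\nabla_i\nu|^2
\]
in an eigenframe of $h$. It follows from Codazzi in the form $(\nabla A^-)^- = T - \nabla\nu\otimes h$, with $T = (\nabla A)^-$ totally symmetric:
\begin{enumerate}
\item Split off the symmetrisation $B$ of $\nabla\nu\otimes h$, so that $|(\nabla A^-)^-|^2 = |T-B|^2 + |B - \nabla\nu\otimes h|^2$.
\item Bound $|T-B|^2$ below by $\frac{3}{n+2}|\tr(T)-\tr(B)|^2$.
\item Compute both pieces explicitly and use $\sum_{i\neq j}\lambda_i^2 \ge \frac{1}{n-1}(|H|-\lambda_j)^2$.
\end{enumerate}
Together with $|H|-\lambda_i \ge \frac{n-1}{n}|H| - \sqrt{\frac{n-1}{n}}|\circo{h}|$ and the pinching, this absorbs the leftover using only $14/15$ of $|(\nabla A^-)^-|^2$.

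\textbf{Smaller corrections.} That $14/15$ margin is what dictates $\sigma\le 1/30$, not the reaction terms, which are nonpositive for every $\sigma\le 1/2$. Naff's restriction in dimensions $5$ and $6$ likewise came from a non-sharp lower bound for $|(\nabla A^-)^-|^2$ in terms of $\nabla\nu$, not from the Kato inequality for $\nabla A$. Once the sharp bound is in place the argument is uniform in $n\ge 5$. So you need not defer $n\ge 7$ to Naff, whose statement need not cover exactly the range $\sigma\in(0,1/30]$.
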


Since $Q \leq c|H|^2$, the planarity estimate implies that the scale-invariant quantity $|A^-|^2/|H|^2$ becomes small whenever $|H|^2$ is large enough relative to $M_0$.

\begin{corollary}
In the setting of Theorem~\ref{main}, the estimate 
\[
|A^-|^2 \leq C |H|^{2-2\sigma}
\]
holds at every point of $M_t$, where $C = C(n, \sigma, M_0)$.
\end{corollary}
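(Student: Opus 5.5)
The corollary follows directly from Theorem~\ref{main} together with the pinching bound $Q \leq c|H|^2$; no further analysis of the flow is needed.

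Fix $\sigma \in (0,\frac{1}{30}]$ and set
\[
C_0 = \max_{M_0} \frac{|A^-|^2}{Q^{1-\sigma}}.
\]
This is a finite constant depending only on $M_0$, $n$ and $\sigma$. It is finite because $M_0$ is closed and $Q>0$ on $M_0$, so $Q$ is bounded below by a positive constant there.

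We also need $Q>0$ on every $M_t$, so that the quotient makes sense along the flow. This is the Andrews--Baker preservation result: quadratic pinching with $c \leq \frac{4}{3n}$ is preserved. In particular $|H|>0$ everywhere, so the splitting $A = h\nu + A^-$ is defined on each $M_t$.

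Theorem~\ref{main} then gives, at every point of $M_t$,
\[
|A^-|^2 \leq C_0\, Q^{1-\sigma}.
\]
Since $|A|^2 \geq 0$ we have $Q = c|H|^2 - |A|^2 \leq c|H|^2$. As $1-\sigma>0$, the map $s\mapsto s^{1-\sigma}$ is increasing on $[0,\infty)$, so $Q^{1-\sigma} \leq c^{1-\sigma}|H|^{2-2\sigma}$. Combining,
\[
|A^-|^2 \leq C_0\, c^{1-\sigma}\, |H|^{2-2\sigma},
\]
so we may take $C = C_0 c^{1-\sigma}$. Since $c \leq \frac{4}{3n}$, this can be bounded by a constant $C(n,\sigma,M_0)$.

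This is routine given Theorem~\ref{main}. The only points needing care are the positivity of $Q$ for $t>0$, which comes from Andrews--Baker, and the monotonicity of the power $1-\sigma$, which holds because $\sigma < 1$.
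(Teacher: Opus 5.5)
Your proposal is correct and is essentially the paper's own argument. The paper also derives the corollary directly from Theorem~\ref{main} via $Q \leq c|H|^2$, which gives $C = c^{1-\sigma}\max_{M_0}|A^-|^2/Q^{1-\sigma}$, with positivity of $Q$ along the flow supplied by the preservation of pinching.
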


It follows that all smooth blow-up limits at a singularity of the flow are hypersurfaces---see \cite[Proposition~2.5]{Naff}.

We have stated Theorem~\ref{main} in all dimensions $n \geq 5$, but let us repeat that it is only a genuine extension of Naff's work for $n \in \{5,6\}$. We omit dimensions $n \in \{2,3,4\}$ for two reasons: our proof does not work there, and, more importantly, in these dimensions the stronger umbilic estimate of Andrews--Baker \cite{Andrews--Baker} already forces singularity models to be spherical, rather than merely planar.

\subsection{The proof} As in \cite{Naff}, our proof of Theorem~\ref{main} proceeds by application of the parabolic maximum principle to the quantity $P := |A^-|^2/Q^{1-\sigma}$. Our key contribution is a better understanding of the gradient terms appearing in the evolution equation
\begin{align}\label{evol A-}
\frac{1}{2}(\partial_t - \Delta)|A^-|^2 &= |A^-\cdot A^-|^2 + \sum_{\alpha > 1}|R^\perp(\cdot,\cdot)\nu_\alpha|^2 - |\nabla A^-|^2 \notag \\
&- 2\langle \nabla h \otimes \nu, \nabla A^-\rangle + 2\frac{h_{ij}}{|H|}\langle \nabla |H| \otimes \nu, \nabla A^-_{ij}\rangle,
\end{align}
which was derived in \cite{Naff}.

The cross-terms on the second line need to be absorbed by the good Bochner term $|\nabla A^-|^2$ and further favourable terms that result from our dividing by $Q^{1-\sigma}$. Naive estimation of these cross-terms using Cauchy--Schwarz turns out to be insufficient. Instead, the first key observation is that, in both cross-terms, $\nabla A^-$ is paired with $\nu$, and hence can be replaced using the identity $\langle \nu, \nabla A^-\rangle = -\langle \nabla \nu, A^-\rangle$. The second key observation is that the tensors $\nabla \nu$ and 
\[
(\nabla A^-)^- = \nabla A^- - \langle \nu, \nabla A^-\rangle \nu
\]
are not independent; rather, they constrain each other through the Codazzi relations. This suggests that factors of $\nabla \nu$ can be absorbed using Cauchy--Schwarz and the second component of the Bochner term $|\nabla A^-|^2 = |\langle \nu, \nabla A^-\rangle|^2 + |(\nabla A^-)^-|^2$.

The new technical ingredient in our argument is a precise quantitative relationship between the tensors $\nabla \nu$ and $(\nabla A^-)^-$. This is the content of the following sharp Kato-type inequality, which holds for a general submanifold at any point where $|H| > 0$: after choosing an orthonormal frame where $h$ is diagonal with entries $\lambda_i$, we have
\begin{equation}\label{sharp Bochner}
|(\nabla A^-)^-|^2 \geq \frac{2n}{(n+2)(n-1)}\sum_i(|H| - \lambda_i)^2|\nabla_i \nu|^2.
\end{equation} 
Our proof of \eqref{sharp Bochner}, which can be found in Proposition~\ref{lb (nabla A-)-} below, relies on a judicious splitting of $(\nabla A^-)^-$ into orthogonal components, and sharp estimates for each of these components derived from the Codazzi relations. While estimates similar to \eqref{sharp Bochner} were employed in Naff's work, in hindsight one sees that these were not optimal: they mixed components of $(\nabla A^-)^-$ which need to be treated independently. This is precisely where the restriction $c \leq \frac{3(n+1)}{2n(n+2)}$ enters Naff's argument, while the new sharp version \eqref{sharp Bochner} allows us to carry through the proof of the planarity estimate for all $c \leq \frac{4}{3n}$.

While the inequality \eqref{sharp Bochner} appears to be new, it belongs to a long line of sharp Kato-type inequalities which have previously been exploited to prove beautiful theorems in geometric analysis. Huisken's inequality $|\nabla A|^2 \geq \frac{3}{n+2}|\nabla H|^2$ underlies both the convergence theorem for convex hypersurfaces under the mean curvature flow \cite{Huisken} and its generalisation to higher codimension \cite{Andrews--Baker}. Schoen--Simon--Yau employed the sharp inequality $|\nabla A|^2 \geq (1 + \tfrac{2}{n})\,|\nabla |A||^2$ (which requires $H=0$) to prove their curvature estimate for stable minimal hypersurfaces \cite{Schoen--Simon--Yau}. Further examples are plentiful; a systematic discussion of many of these can be found in \cite{CGH}.

\subsection{Structure} With the crucial new ingredient \eqref{sharp Bochner} at hand, we can largely proceed as in \cite{Naff}. However, rather than simply citing results from that paper, we have opted to give a complete proof of Theorem~\ref{main}. After fixing notation and collecting preliminary results in Section~\ref{sec prelim}, we analyse the reaction terms appearing in the evolution equation for $Q$ in Section~\ref{sec pinch}. In particular, we provide an alternative proof that the pinching condition $Q > 0$ is preserved for $c \leq \frac{4}{3n}$, tailored to our needs and in which the threshold $\frac{4}{3n}$ appears rather transparently. In Section~\ref{sec grad} we study the gradient terms in \eqref{evol A-}, and in particular establish \eqref{sharp Bochner}. Finally, in Section~\ref{sec plan} we carry out the proof of Theorem~\ref{main}.

\subsection{Applications} We now discuss some immediate applications of our extended planarity estimate. In recent work with Nguyen \cite{Lynch--Nguyen}, we developed a mean curvature flow with surgery for quadratically pinched submanifolds. In dimensions $n \geq 8$ the construction requires $c \leq \frac{1}{n-2}$, which is sharp for the kinds of surgeries performed, and in dimension $n = 7$ it applies to the full range of pinching constants $c \leq \frac{4}{21}$. However, in dimensions $n \in \{5,6\}$, although one expects to be able to perform surgery for the full range of pinching constants $c \leq \frac{4}{3n}$, we needed to impose $c \leq \frac{3(n+1)}{2n(n+2)}$ in order to invoke the planarity estimate from \cite{Naff}. Using Theorem~\ref{main} instead, with minor modifications to \cite{Lynch--Nguyen} (these are discussed in Section~\ref{sec surgery} below), we can now construct a flow with surgery for $n \in \{5,6\}$ and the full range of pinching conditions known to be preserved by the flow:

\begin{theorem}\label{surgery}
Fix $n \in \{5,6\}$ and $m \geq 2$. Let $M_0$ be a closed $n$-dimensional immersion in $\mathbb{R}^{n+m}$ satisfying $|A|^2 < c|H|^2$ with $c \leq \frac{4}{3n}$. Then there exists a mean curvature flow with surgery starting from $M_0$ and terminating after finitely many steps. 
\end{theorem}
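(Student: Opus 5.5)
The plan is to adapt the surgery construction of \cite{Lynch--Nguyen} and to replace Naff's planarity estimate by Theorem~\ref{main}. The pinching restriction $c \leq \frac{3(n+1)}{2n(n+2)}$ entered that paper only through the planarity estimate, so the argument reduces to checking which downstream estimates actually used the smaller constant. I would run through the construction in its usual order and, at each stage, record which pinching hypothesis is needed.

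\textbf{Step 1.} Since surgery does not increase $|A|^2/|H|^2$ and preserves closed immersed submanifolds, the pinching $|A|^2 < c|H|^2$ with $c \leq \frac{4}{3n}$ is preserved across surgeries. Using Theorem~\ref{main}, I would deduce a planarity estimate of the form $|A^-|^2 \leq \varepsilon|H|^2 + C_\varepsilon$ that is uniform over the surgery flow. This requires checking that $\max |A^-|^2/Q^{1-\sigma}$ does not increase under the surgery procedure. I expect this to hold because the gluing region is nearly a round neck lying in an $(n+1)$-plane, where $A^-$ is tiny relative to $Q$.

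\textbf{Step 2.} With planarity in hand, I would recover the cylindrical and gradient estimates. The hypersurface arguments of Huskin--Sinestrari type, as adapted in \cite{Lynch--Nguyen} to the component $h$, need the pinching to force every blow-up limit to be a weakly convex hypersurface with $\lambda_1 \geq 0$. Here $c \leq \frac{4}{3n}$ is below $\frac{1}{n-2}$ for $n \in \{5,6\}$, since $\frac{4}{15} < \frac13$ and $\frac{4}{18} < \frac14$. So codimension-one limits are $(n-2)$-convex, in fact uniformly two-convex, and the neck-detection and surgery machinery of Huisken--Sinestrari or Haslhofer--Kleiner carries over. The remaining work is to verify that the constants in the gradient, cylindrical and neck estimates depend only on $n$, $c$ and the planarity constant.

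\textbf{Step 3.} The main obstacle is the cylindrical estimate, which I would prove by a Stampacchia iteration on $f = (|A|^2 - \frac{1}{n-1}|H|^2)/|H|^{2-\sigma}$, or its analogue. In \cite{Lynch--Nguyen} the reaction-term estimates for this iteration may have quietly used the stronger pinching, because the normal-curvature terms $|R^\perp|^2$ must be controlled. My approach would be to bound those terms by $|A^-|^2|A|^2$ and then absorb them using Theorem~\ref{main}, which makes $|A^-|^2$ of lower order. After these modifications the finiteness of surgeries follows verbatim from the volume-decrease argument of \cite{Lynch--Nguyen}.
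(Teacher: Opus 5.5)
Your overall strategy matches the paper's: raise the pinching constant in the Lynch--Nguyen surgery class to $\frac{4}{3n}$, and substitute Theorem~\ref{main} for Naff's estimate in the proof of \cite[Theorem~5.1]{Lynch--Nguyen}. The paper also notes that $\sigma\in(0,\frac{1}{30}]$ is then admissible for all $n\geq 5$. Steps~1--2 are consistent with this, but your justification in Step~1 for why surgery does not increase $\max P$ does not work as stated. The quantity $P=|A^-|^2/Q^{1-\sigma}$ is not scale-invariant: from $|A^-|^2\le\delta Q$ on a neck you only get $P\le\delta Q^{\sigma}$, which is large at surgery scales. Compatibility of $P$ with surgery is handled inside the Lynch--Nguyen proof of their Theorem~5.1, and the paper reuses that proof with Theorem~\ref{main} in place of Naff's estimate. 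You should do the same rather than appeal to near-planarity of necks.

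The genuine gap is in Step~3, which is where the rest of the paper's proof lies. Your premise that the restriction $c\leq\frac{3(n+1)}{2n(n+2)}$ entered Lynch--Nguyen only through the planarity estimate is not accurate. The paper says the setup of \cite[Theorem~5.3]{Lynch--Nguyen} must also be modified, and that its proof still works once $c$ is taken in the range $[\frac12(\frac{4}{3n}+\frac{1}{n-1}),\frac{4}{3n}]$ for $n\in\{5,6\}$. Everything else in that paper then goes through unchanged. Your proposal does not identify this re-parametrisation, nor check that the estimate survives it. Instead you suggest re-deriving the reaction terms of a cylindrical-estimate iteration by bounding $|R^\perp|^2$ by $|A^-|^2|A|^2$ and absorbing it with planarity. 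That is not the change the paper identifies. Left at ``may have quietly used the stronger pinching'', it does not verify the one place, other than the planarity estimate, where enlarging $c$ to $\frac{4}{3n}$ has to be accounted for.
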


As an immediate consequence of Theorem~\ref{surgery}, we can classify quadratically pinched immersions with $n \in \{5,6\}$ up to diffeomorphism. 

\begin{corollary}
Suppose $n \in \{5,6\}$ and $m \geq 2$. If $M_0$ is a closed $n$-dimensional immersion in $\mathbb{R}^{n+m}$ satisfying $|A|^2 < c|H|^2$ with $c \leq \frac{4}{3n}$, then it is diffeomorphic either to $\mathbb{S}^n$ or to a finite connected sum of the sphere bundles $\mathbb{S}^{n-1}\times\mathbb{S}^1$ and $\mathbb{S}^{n-1} \simtimes \mathbb{S}^1$.
\end{corollary}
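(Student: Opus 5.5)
The corollary is a topological consequence of Theorem~\ref{surgery}, so the plan is to run the surgery flow backwards in time and track the diffeomorphism type. By Theorem~\ref{surgery}, starting from $M_0$ we obtain a mean curvature flow with surgery that terminates after finitely many steps. Between surgery times the flow is a smooth isotopy, so the diffeomorphism type is unchanged. We then analyse the two kinds of discrete events allowed in the construction of \cite{Lynch--Nguyen}, adapted in Section~\ref{sec surgery}.

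\textbf{Neck surgeries.} A surgery replaces a neck, diffeomorphic to $\mathbb{S}^{n-1}\times[-1,1]$, by two caps, each diffeomorphic to a disc $D^n$. Reversing this, the pre-surgery manifold is recovered from the post-surgery one in one of two ways. If the two caps lie in different components, we take a connected sum of those components. If they lie in the same component, we take a connected sum with an $\mathbb{S}^{n-1}$-bundle over $\mathbb{S}^1$, namely $\mathbb{S}^{n-1}\times\mathbb{S}^1$ or $\mathbb{S}^{n-1}\simtimes\mathbb{S}^1$ depending on orientation compatibility.

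\textbf{Discarded components.} At surgery times, components are discarded once they are recognised as having known topology. In \cite{Lynch--Nguyen}, thanks to the planarity estimate and the resulting hypersurface-type canonical neighbourhoods, these are either
\begin{itemize}
\item convex, hence diffeomorphic to $\mathbb{S}^n$, or
\item capped or closed necks, diffeomorphic to $\mathbb{S}^n$ or to one of the $\mathbb{S}^{n-1}$-bundles over $\mathbb{S}^1$.
\end{itemize}
The flow terminates when nothing remains.

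\textbf{Backward induction.} We induct backwards over the finitely many surgery times. The claim is that each component at each time is diffeomorphic to $\mathbb{S}^n$ or to a finite connected sum of the bundles. The base case holds because the final discarded pieces are of this form. The inductive step uses three standard facts: the class is closed under connected sum, $\mathbb{S}^n$ is the identity for connected sum, and self-connected-sum adds one bundle summand. Since $M_0$ is connected, it lies in this class, and the corollary follows.

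\textbf{Main obstacle.} The main difficulty is ensuring that the surgery of \cite{Lynch--Nguyen} really produces only these topological pieces for $n\in\{5,6\}$ and $c\le\frac{4}{3n}$. This requires that regions of high curvature be modelled on $\mathbb{R}^{n-1}\times\mathbb{S}^1$-type necks, i.e.\ on $(n-1)$-cylinders, rather than on shrinking cylinders with more flat directions. Theorem~\ref{main} reduces this to the hypersurface case. There, $|A|^2<c|H|^2$ with $c\le\frac{4}{3n}<\frac{1}{n-2}$ rules out the models $\mathbb{S}^{k}\times\mathbb{R}^{n-k}$ with $k\le n-2$, since those have $|A|^2/|H|^2=\frac1k\ge\frac{1}{n-2}$. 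I would cite this as the content of the modifications discussed in Section~\ref{sec surgery}.
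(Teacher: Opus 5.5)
Your argument is correct and is essentially the paper's, which deduces the corollary immediately from Theorem~\ref{surgery} by the same bookkeeping: undoing a neck surgery is a connected sum or adds an $\mathbb{S}^{n-1}$-bundle summand, and discarded components are spheres or such bundles. Two small slips: the necks are modelled on $\mathbb{S}^{n-1}\times\mathbb{R}$, not $\mathbb{R}^{n-1}\times\mathbb{S}^1$ (your next sentence shows this is what you meant), and between surgeries the flow is a regular homotopy of immersions rather than an isotopy, though neither affects the diffeomorphism type.
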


Combined with the results of \cite{Andrews--Baker} and \cite{Lynch--Nguyen}, this completes the classification of quadratically pinched immersions with $c \leq \frac{4}{3n}$ for $2 \leq n \leq 8$.

\subsection{Previous work} To conclude this introduction, let us mention several other results concerning the mean curvature flow under quadratic pinching. Ancient solutions and solitons of higher codimension were studied in \cite{Risa--Sinestrari, Lynch--Nguyen_ancient, Naff_singularity, LNZ, IRR}, and singularity formation was studied in \cite{Lynch--Nguyen_convexity, Naff_canonical, BLL}. In the setting of surfaces in $\mathbb{R}^4$, more general quadratic pinching conditions were considered in \cite{Baker--Nguyen}. The articles \cite{Liu--Xu--Ye--Zhao, Liu--Xu--Zhao, Lei--Xu, Baker--Nguyen_sphere, Pipoli--Sinestrari, Nguyen--Vogiatzi, Vogiatzi} concern the flow of pinched submanifolds in non-Euclidean ambient spaces. An important precursor to \cite{Andrews--Baker} was \cite{Huisken_sphere}, which considered the mean curvature flow of quadratically pinched hypersurfaces in spheres. 

Outside the setting of quadratic pinching, Colding and Minicozzi have studied the interplay between entropy and codimension bounds for ancient solutions to the mean curvature flow \cite{Colding--Minicozzi}. They prove in particular that an ancient solution whose parabolic blow-down is a cylinder with multiplicity one must be planar. Altschuler showed in \cite{Altschuler} that singularity models of closed solutions to the high-codimension curve shortening flow are plane curves.

\section{Preliminaries}\label{sec prelim}

\subsection{Notation} Let $M$ be an $n$-dimensional immersed submanifold of $\mathbb{R}^{n+m}$. We write $A$ for its second fundamental form and $H$ for its mean curvature vector. When $|H| > 0$ we define $\nu = H/|H|$. We then set $h = \langle A, \nu\rangle$ and $A^- = A - h\nu$, and thus have the orthogonal decomposition $A = h\nu + A^-$. Note that $A^-$ is traceless. We will sometimes fix an orthonormal frame for the normal space, denoted $\nu_\alpha$, such that $\nu_1 = \nu$. In such a frame each of the components $A^-_\alpha = \langle A^-, \nu_\alpha\rangle$ is traceless. 

The induced connection on the tangent bundle of $M$ is given by $\nabla_X Y = (D_X Y)^\top$, where $D$ is the ambient Euclidean connection and $X$, $Y$ are tangent vector fields. The induced connection on the normal bundle is given by $\nabla_X V = (D_X V)^\perp$ where $X$ is tangent and $V$ is normal. The normal curvature tensor is 
\[
R^\perp(X,Y)V = \nabla_X\nabla_Y V - \nabla_Y\nabla_X V - \nabla_{[X,Y]}V,
\]
where $X,Y$ are tangent to $M$ and $V$ is normal. The Ricci equations assert that 
\[
\langle R^\perp(\cdot,\cdot)U, V\rangle 
=[\langle A,V\rangle, \langle A,U\rangle],
\]
where the right-hand side should be interpreted as the commutator of symmetric matrices after a choice of orthonormal basis for the tangent space. 

The notation $Q$ will always refer to $Q = c|H|^2 - |A|^2$ for some $c \leq \frac{4}{3n}$. When $Q$ is positive, $P$ refers to the ratio $P = |A^-|^2/Q^{1-\sigma}$, where $\sigma \in (0,1)$. 

\subsection{Evolution equations} In \cite{Andrews--Baker}, it was shown that along a solution of the mean curvature flow, we have
\[
\frac{1}{2}(\partial_t - \Delta)|A|^2 = |A\cdot A|^2 + |R^\perp|^2 - |\nabla A|^2.
\]
where $(A \cdot A)_{ijkl} = \langle A_{ij}, A_{kl}\rangle$, and 
\[
\frac{1}{2}(\partial_t-\Delta)|H|^2 = |\langle A, H\rangle|^2 -|\nabla H|^2. 
\]
Consequently,
\begin{align*}
\frac{1}{2}(\partial_t - \Delta)Q &= c|\langle A, H\rangle|^2 - |A\cdot A|^2 - |R^\perp|^2 - c|\nabla H|^2 + |\nabla A|^2.
\end{align*}
The evolution equation for $|A^-|^2$ derived in \cite{Naff} will also play an important role. It was stated in the introduction as equation \eqref{evol A-}. 

\subsection{Commutator estimate} The evolution equations for $Q$ and $|A^-|^2$ both contain terms which are quadratic in the normal curvature. These can be expressed in terms of commutators of normal components of $A$ via the Ricci equations. We will repeatedly estimate terms of this form using the following lemma.

\begin{lemma}\label{commutator}
Suppose $B$ and $C$ are two symmetric matrices. We then have
\[
2\langle B, C\rangle^2 + |[B,C]|^2 \leq 2|B|^2|C|^2.
\]
If in addition $C$ is traceless, then 
\[
2\langle B, C\rangle^2 + |[B,C]|^2 \leq 2|\circo{B}|^2|C|^2.
\]
\end{lemma}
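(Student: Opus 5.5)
Diagonalise $B$ in an orthonormal basis, $B=\mathrm{diag}(b_1,\dots,b_n)$, and write $C=(c_{ij})$. This makes both sides explicit:
\[
\langle B,C\rangle=\sum_i b_i c_{ii},\qquad |[B,C]|^2=\sum_{i\neq j}(b_i-b_j)^2c_{ij}^2 .
\]
The plan is to bound the diagonal and off-diagonal parts of $C$ separately. The pairing $\langle B,C\rangle$ sees only the diagonal of $C$, and the commutator sees only the off-diagonal part.

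\emph{Diagonal part.} By Cauchy--Schwarz,
\[
2\langle B,C\rangle^2 \le 2\Big(\sum_i b_i^2\Big)\Big(\sum_i c_{ii}^2\Big).
\]
\emph{Off-diagonal part.} Use the elementary bound $(b_i-b_j)^2\le 2(b_i^2+b_j^2)\le 2|B|^2$ for each pair $i\ne j$. This gives
\[
|[B,C]|^2\le 2|B|^2\sum_{i\ne j}c_{ij}^2 .
\]
Adding the two estimates yields
\[
2\langle B,C\rangle^2+|[B,C]|^2\le 2|B|^2|C|^2,
\]
which is the first claim.

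For the traceless case, write $B=\circo{B}+\tfrac{\tr B}{n}I$. Two facts hold when $\tr C=0$:
\begin{itemize}
\item $\langle I,C\rangle=\tr C=0$, so $\langle B,C\rangle=\langle \circo{B},C\rangle$;
\item $[I,C]=0$, so $[B,C]=[\circo{B},C]$.
\end{itemize}
Hence the left-hand side is unchanged when $B$ is replaced by $\circo{B}$. Applying the first inequality to the pair $(\circo{B},C)$ gives the second claim.

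All steps are routine. The only point needing care is the off-diagonal bound, since $(b_i-b_j)^2\le 2|B|^2$ is not sharp in general. If a finer constant were wanted, one would instead use $(b_i-b_j)^2\le 2(b_i^2+b_j^2)$ paired against $c_{ij}^2$ and sum. For the stated inequality, however, the crude bound already suffices because the diagonal and off-diagonal contributions add exactly to $|C|^2$.
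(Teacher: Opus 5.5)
Your proof is correct and follows the paper's argument essentially verbatim. You diagonalise $B$, apply Cauchy--Schwarz to $\sum_i b_i c_{ii}$, bound $(b_i-b_j)^2\le 2|B|^2$ in the commutator sum, and reduce the traceless case to the first via $\langle B,C\rangle=\langle \circo{B},C\rangle$ and $[B,C]=[\circo{B},C]$; your closing remark about a finer constant is unnecessary but harmless.
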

\begin{proof} Diagonalising $B$ with eigenvalues $b_i$, we see that
\[
\langle B, C\rangle^2 = \bigg(\sum_{i,j} B_{ij}C_{ij}\bigg)^2 = \bigg(\sum_i b_i C_{ii}\bigg)^2 \leq |B|^2 \sum_i C_{ii}^2
\]
by Cauchy--Schwarz, and 
\[
|[B,C]|^2 = \sum_{i,j}(b_i - b_j)^2C_{ij}^2 = \sum_{i \not=j}(b_i - b_j)^2C_{ij}^2 \leq 2|B|^2 \sum_{i \not=j}(C_{ij})^2.
\]
Combining these inequalities,
\[
2\langle B, C\rangle^2 + |[B,C]|^2 \leq 2|B|^2 \bigg(\sum_i C_{ii}^2 + \sum_{i \not=j}(C_{ij})^2\bigg) = 2|B|^2|C|^2.
\]
If $C$ is traceless then
\[
\langle B, C\rangle = \langle\circo{B}, C\rangle \qquad \text{and} \qquad [B,C] = [\circo{B},C],
\]
so we can apply the previous inequality with $\circo{B}$ in place of $B$. 
\end{proof}

\subsection{Kato inequalities} The following inequality was proved by Huisken in \cite{Huisken}. Although it is now well known, we recall the argument, since related ideas play a role in the proof of Proposition~\ref{lb (nabla A-)-} below.

\begin{lemma}\label{Kato}
Let $E$ be a totally symmetric $(0,3)$ tensor on $TM$. We then have 
\[
|E|^2 \geq \frac{3}{n+2}|\tr(E)|^2.
\]
\end{lemma}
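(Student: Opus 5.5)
The plan is to decompose $E$ orthogonally into its pure-trace part and a totally traceless remainder, in the standard way. Write $V = \tr(E)$, so that $V_k = \sum_i E_{iik}$; by total symmetry it does not matter which pair of indices is contracted. I would introduce the totally symmetric tensor
\[
F_{ijk} = \frac{1}{n+2}\big(V_i\delta_{jk} + V_j\delta_{ik} + V_k\delta_{ij}\big),
\]
where the constant $\frac{1}{n+2}$ is chosen so that $\tr(F) = V$. Contracting $i=j$ gives $\frac{1}{n+2}(V_k + V_k + nV_k) = V_k$, which confirms the choice. Setting $G = E - F$, we then have a totally symmetric tensor with $\tr(G) = 0$.

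The key step is orthogonality, namely $\langle G, F\rangle = 0$. Expanding $\langle G, F\rangle$ against the three terms of $F$ gives three pairings. Each of these, by the symmetry of $G$, reduces to $\frac{1}{n+2}\sum_k V_k\,\tr(G)_k = 0$. Hence $|E|^2 = |G|^2 + |F|^2 \geq |F|^2$.

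It remains to compute $|F|^2$. Squaring the three terms produces diagonal contributions $3n|V|^2$ and cross contributions $6|V|^2$, since for example $\sum V_i\delta_{jk}V_j\delta_{ik} = |V|^2$. This gives
\[
|F|^2 = \frac{3(n+2)}{(n+2)^2}|V|^2 = \frac{3}{n+2}|V|^2,
\]
which is the claimed inequality.

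There is no serious obstacle here; the only care needed is in the bookkeeping of the cross terms when computing $|F|^2$. As an alternative, one could apply Cauchy--Schwarz to $\langle E, F\rangle = \frac{3}{n+2}|V|^2$ together with the value of $|F|$, which avoids needing to verify orthogonality explicitly.
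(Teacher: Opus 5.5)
Your proposal is correct and follows essentially the same approach as the paper. Both split $E$ orthogonally into the pure-trace part $\frac{1}{n+2}(V_i\delta_{jk}+V_j\delta_{ik}+V_k\delta_{ij})$ and a totally traceless remainder, then use $|F|^2=\frac{3}{n+2}|V|^2$; you simply write out the trace, orthogonality and norm computations that the paper leaves implicit.
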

\begin{proof}
The tensor $E$ admits an orthogonal decomposition into a totally traceless component plus its trace part, which is
\[
\frac{1}{n+2}(\tr(E)_i g_{jk} + \tr(E)_k g_{ij} + \tr(E)_j g_{ki}).
\]
Expressing $|E|^2$ as the sum of the squared lengths of these two components, the claim follows, since
\[
\bigg|\frac{1}{n+2}(\tr(E)_i g_{jk} + \tr(E)_k g_{ij} + \tr(E)_j g_{ki})\bigg|^2 = \frac{3}{n+2}|\tr(E)|^2. \qedhere
\]
\end{proof}

Since $\nabla A$ is totally symmetric by the Codazzi equations, one can apply Lemma~\ref{Kato} to each normal component of $\nabla A$ to obtain the Kato inequality 
\[
|\nabla A|^2 \geq \frac{3}{n+2}|\nabla H|^2.
\]

As in \cite{Naff}, we will also find it useful to apply Lemma~\ref{Kato} to various totally symmetric projections of $\nabla A$. We define 
\begin{align*}
S &= \langle\nabla A, \nu\rangle\\
T &= (\nabla A)^- = \nabla A - S\nu\\
U &= \langle \nabla \circo{A}, \nu\rangle = \nabla \circo{h} + \langle\nabla A^-, \nu\rangle.
\end{align*}
The tensor $U$ controls $\nabla |H|$, and $T$ controls $|H|\nabla \nu$, through the following lemma (which recalls Equations 4.21 and 4.20 from \cite{Naff}).

\begin{lemma}\label{Kato projections}
At any point of $M$ where $|H| > 0$ we have
\begin{align*}
|\nabla |H||^2 &\leq \frac{n(n+2)}{2(n-1)}|U|^2,\\
|H|^2|\nabla\nu|^2 &\leq \frac{n+2}{3}|T|^2.
\end{align*}
\end{lemma}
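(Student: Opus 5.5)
The first inequality follows from Lemma~\ref{Kato} applied to $S$, after correcting for the trace part that separates $S$ from $U$. The second follows from Lemma~\ref{Kato} applied componentwise to $T$. Throughout I use $\nabla H = \nabla|H|\,\nu + |H|\nabla\nu$ and $\langle \nabla\nu,\nu\rangle = 0$, which hold wherever $|H|>0$. Pairing with $\nu$ and with $\nu_\alpha$ ($\alpha>1$) respectively gives
\[
\langle \nabla_k H, \nu\rangle = \nabla_k|H|, \qquad \langle \nabla_k H, \nu_\alpha\rangle = |H|\langle\nabla_k\nu,\nu_\alpha\rangle.
\]

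\emph{First inequality.} By Codazzi, $\nabla A$ is totally symmetric, so $S=\langle\nabla A,\nu\rangle$ is a totally symmetric $(0,3)$ tensor. Its trace is $\tr(S)_k = \langle\nabla_k H,\nu\rangle = \nabla_k|H|$. Lemma~\ref{Kato} therefore gives
\[
|S|^2 \geq \tfrac{3}{n+2}|\nabla|H||^2.
\]
The tensor $U$ is not itself totally symmetric, so I do not apply Lemma~\ref{Kato} to it directly. Instead I compute $|U|^2$ from $S$. Since $\nabla\circo{A} = \nabla A - \tfrac1n g\otimes\nabla H$, we have
\[
U_{ijk} = S_{ijk} - \tfrac1n g_{ij}\nabla_k|H|.
\]
Expanding the square, and using $\sum_{i} S_{iik} = \nabla_k|H|$ and $|g|^2=n$, gives
\[
|U|^2 = |S|^2 - \tfrac{2}{n}|\nabla|H||^2 + \tfrac1n|\nabla|H||^2 = |S|^2 - \tfrac1n|\nabla|H||^2.
\]
Combining this with the Kato bound for $S$ yields
\[
|U|^2 \geq \Big(\tfrac{3}{n+2}-\tfrac1n\Big)|\nabla|H||^2 = \tfrac{2(n-1)}{n(n+2)}|\nabla|H||^2,
\]
which is the first claim.

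\emph{Second inequality.} Work in a normal frame with $\nu_1=\nu$. Then $T=\sum_{\alpha>1}\langle\nabla A,\nu_\alpha\rangle\nu_\alpha$. Each component $T_\alpha=\langle\nabla A,\nu_\alpha\rangle$ is totally symmetric by Codazzi, and its trace is $\langle\nabla H,\nu_\alpha\rangle = |H|\langle\nabla\nu,\nu_\alpha\rangle$. Applying Lemma~\ref{Kato} to each $T_\alpha$ and summing over $\alpha>1$ gives
\[
|T|^2\geq\tfrac{3}{n+2}|H|^2\sum_{\alpha>1}|\langle\nabla\nu,\nu_\alpha\rangle|^2 .
\]
Because $\nabla\nu\perp\nu$, the sum on the right equals $|\nabla\nu|^2$. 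Hence $|T|^2 \geq \tfrac{3}{n+2}|H|^2|\nabla\nu|^2$, which is the second claim.

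The main point needing care is the first inequality. Since $U$ is not totally symmetric, one must pass through $S$ and keep exact track of the cross term in $|U|^2$. Everything else is a direct application of Lemma~\ref{Kato}.
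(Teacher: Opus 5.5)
Your proof is correct and follows the paper's argument. For the first inequality the paper also combines the identity $|S|^2 = |U|^2 + \frac{1}{n}|\nabla |H||^2$ with Lemma~\ref{Kato} applied to $S$, and for the second it applies Lemma~\ref{Kato} to $T$, whose trace is $|H|\nabla\nu$. You simply make explicit the cross-term computation and the componentwise application of Lemma~\ref{Kato} in a normal frame.
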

\begin{proof}
Using $|S|^2 = |U|^2 + \frac{1}{n}|\langle \nabla H, \nu\rangle|^2$ and Lemma~\ref{Kato} we obtain
\[
|U|^2 = |S|^2 - \frac{1}{n}|\langle \nabla H, \nu\rangle|^2 \geq \bigg(\frac{3}{n+2}-\frac{1}{n}\bigg)|\langle \nabla H, \nu\rangle|^2 = \frac{2(n-1)}{n(n+2)}|\nabla |H||^2,
\]
which is the first claimed inequality. The second inequality is simply Lemma~\ref{Kato} applied to the tensor $T$. 
\end{proof}

The next statement is essentially the same as Lemma~4.7 in \cite{Naff}. 
\begin{lemma}\label{Bochner Q}
If $Q >0$ with $c \leq \frac{4}{3n}$ then we have 
\[
|\nabla A|^2 - c|\nabla H|^2 \geq \frac{5n-8}{6(n-1)}|U|^2 + \frac{5n-8}{n+2}(Q + |\circo{h}|^2 + |A^-|^2)|\nabla \nu|^2.
\]
\end{lemma}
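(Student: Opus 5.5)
The plan is to split both $|\nabla A|^2$ and $|\nabla H|^2$ into their $\nu$-components and their $(\cdot)^-$-components. Each half is then handled with Lemma~\ref{Kato projections}, and the pinching $c \leq \frac{4}{3n}$ enters only through linear inequalities in $c$.

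\textbf{Step 1: decompose the gradients.} By orthogonality we have
\[
|\nabla A|^2 = |S|^2 + |T|^2 .
\]
Since $\langle \nabla H, \nu\rangle = \nabla |H|$ and $\nabla H = \nabla|H|\,\nu + |H|\nabla \nu$ with $\nabla\nu \perp \nu$, we also have
\[
|\nabla H|^2 = |\nabla|H||^2 + |H|^2|\nabla\nu|^2 .
\]
As in the proof of Lemma~\ref{Kato projections}, $|S|^2 = |U|^2 + \frac1n|\nabla|H||^2$. Combining these,
\[
|\nabla A|^2 - c|\nabla H|^2 = |U|^2 + \Big(\tfrac1n - c\Big)|\nabla|H||^2 + |T|^2 - c|H|^2|\nabla\nu|^2 .
\]

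\textbf{Step 2: the $U$-terms.} Since $c \leq \frac{4}{3n}$, we have $\frac1n - c \geq -\frac{1}{3n}$. The first inequality of Lemma~\ref{Kato projections} then gives
\[
\Big(\tfrac1n - c\Big)|\nabla|H||^2 \geq -\frac{n+2}{6(n-1)}|U|^2 .
\]
The total coefficient of $|U|^2$ is therefore
\[
1 - \frac{n+2}{6(n-1)} = \frac{5n-8}{6(n-1)},
\]
which is the claimed coefficient.

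\textbf{Step 3: the $\nabla\nu$-terms.} Write $|A|^2 = \frac1n|H|^2 + |\circo h|^2 + |A^-|^2$. Then
\[
Q + |\circo h|^2 + |A^-|^2 = \Big(c - \tfrac1n\Big)|H|^2 .
\]
So it suffices to show
\[
|T|^2 - c|H|^2|\nabla\nu|^2 \geq \frac{5n-8}{n+2}\Big(c-\tfrac1n\Big)|H|^2|\nabla\nu|^2 .
\]
Using the second inequality of Lemma~\ref{Kato projections}, namely $|T|^2 \geq \frac{3}{n+2}|H|^2|\nabla\nu|^2$, this reduces to the scalar inequality
\[
\frac{3}{n+2} - c \geq \frac{5n-8}{n+2}\Big(c - \tfrac1n\Big).
\]
Multiplying through by $n(n+2)$, this becomes $8n-8 \geq 6n(n-1)c$, i.e. $c \leq \frac{4}{3n}$, which holds exactly at the threshold.

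\textbf{Expected obstacle.} There is no serious difficulty. The only care needed is bookkeeping: check the identities $|S|^2 = |U|^2 + \frac1n|\nabla|H||^2$ and $\langle\nabla H,\nu\rangle = \nabla|H|$, and verify that both the $U$-coefficient and the $\nabla\nu$-coefficient close up precisely at $c = \frac{4}{3n}$. Note also that if $c < \frac1n$ the right-hand side has $Q + |\circo h|^2 + |A^-|^2 < 0$. This causes no problem, because Step~3 is a linear inequality in $c$ that holds throughout $c \leq \frac{4}{3n}$. In fact $Q > 0$ already forces $c > \frac1n$, since $|A|^2 \geq \frac1n|H|^2$.
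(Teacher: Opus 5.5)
Your proof is correct and follows the paper's own route. Both use the splittings $|\nabla A|^2 = |S|^2 + |T|^2$ and $|\nabla H|^2 = |\nabla|H||^2 + |H|^2|\nabla\nu|^2$, both halves of Lemma~\ref{Kato projections}, and the relation $Q + |\circo{h}|^2 + |A^-|^2 = (c-\frac{1}{n})|H|^2$. The only cosmetic difference is that the paper inverts $c - \frac{1}{n}$ to get $|H|^2 \geq 3n(Q + |\circo{h}|^2 + |A^-|^2)$, whereas you keep the identity and check a linear inequality in $c$, which avoids relying on $c > \frac{1}{n}$.
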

\begin{proof}
Since $|S|^2 = |U|^2 + \frac{1}{n}|\nabla |H||^2$, using Lemma~\ref{Kato projections} and $c \leq \frac{4}{3n}$ we see that
\[
|S|^2 - c|\nabla |H||^2 \geq \bigg(1 - \bigg(c-\frac{1}{n}\bigg)\frac{n(n+2)}{2(n-1)}\bigg)|U|^2 \geq \frac{5n-8}{6(n-1)}|U|^2.
\]
Lemma~\ref{Kato projections} and $c \leq \frac{4}{3n}$ also imply that
\[
|T|^2 - c|H|^2|\nabla \nu|^2 \geq \frac{5n-8}{3n(n+2)}|H|^2 |\nabla \nu|^2.
\]
Since $|\nabla A|^2 = |S|^2 + |T|^2$ and $|\nabla H|^2 = |\nabla |H||^2 + |H|^2|\nabla \nu|^2$, the claim follows once we sum the previous two inequalities and insert
\[
|H|^2 = \bigg(c-\frac{1}{n}\bigg)^{-1}(Q + |\circo{h}|^2 + |A^-|^2) \geq 3n(Q + |\circo{h}|^2 + |A^-|^2). \qedhere
\]
\end{proof}

\subsection{Pinching and eigenvalues} To conclude this preliminary section we record an elementary lemma, which is useful for deriving bounds on the eigenvalues of $h$ from our quadratic pinching hypothesis. 

\begin{lemma}\label{eigenvalue est}
Let $h$ be a symmetric matrix. The eigenvalues of $h$ lie in the interval
\[
\left[\frac{\tr(h)}{n} - \sqrt{\frac{n-1}{n}}|\circo{h}|,\frac{\tr(h)}{n} + \sqrt{\frac{n-1}{n}}|\circo{h}|\right].
\]
\end{lemma}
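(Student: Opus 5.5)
Let $\lambda_1,\dots,\lambda_n$ be the eigenvalues of $h$ and $\bar\lambda = \tr(h)/n$. Writing $h = \bar\lambda\, g + \circo{h}$ makes $\lambda_i = \bar\lambda + \mu_i$, where $\mu_i$ are the eigenvalues of $\circo{h}$. So it suffices to show that the eigenvalues $\mu_i$ of a traceless symmetric matrix satisfy $|\mu_i| \leq \sqrt{\frac{n-1}{n}}|\circo{h}|$. The bound is invariant under conjugation, so we diagonalise and work directly with the real numbers $\mu_1,\dots,\mu_n$, which satisfy $\sum_j \mu_j = 0$ and $\sum_j \mu_j^2 = |\circo{h}|^2$.

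Fix an index $i$. Since the eigenvalues sum to zero, $\mu_i = -\sum_{j\neq i}\mu_j$. Applying Cauchy--Schwarz to the $n-1$ remaining terms gives
\[
\mu_i^2 \leq (n-1)\sum_{j\neq i}\mu_j^2 = (n-1)\big(|\circo{h}|^2 - \mu_i^2\big).
\]
Rearranging yields $n\mu_i^2 \leq (n-1)|\circo{h}|^2$, which is $|\mu_i| \leq \sqrt{\frac{n-1}{n}}|\circo{h}|$. Adding back $\bar\lambda = \tr(h)/n$ then places each $\lambda_i$ in the stated interval.

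The argument is short and has no real obstacle. The only step needing care is to apply Cauchy--Schwarz to the other $n-1$ eigenvalues rather than all $n$, since that is what produces the factor $\frac{n-1}{n}$. The bound is sharp: equality holds when $\circo{h}$ has one eigenvalue $\mu$ and the remaining $n-1$ eigenvalues all equal $-\mu/(n-1)$.
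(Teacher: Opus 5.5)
Your proof is correct and is essentially the paper's argument: both apply Cauchy--Schwarz to the sum of the remaining $n-1$ eigenvalues. The paper works with $h$ directly and then solves the resulting quadratic inequality in $\lambda_1$, while you subtract $\tr(h)/n$ first; this amounts to completing the square in that quadratic, so your version is a streamlined presentation of the same proof.
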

\begin{proof}
Let us fix an arbitrary labelling $\lambda_i$ for the eigenvalues of $h$, where $i$ ranges from $1$ to $n$. First we observe that the Cauchy--Schwarz inequality implies
\[
\tr(h)^2 = \lambda_1^2 + 2\lambda_1\sum_{i=2}^{n}\lambda_i + \bigg(\sum_{i=2}^{n}\lambda_i\bigg)^2 \leq \lambda_1^2 + 2\lambda_1\sum_{i=2}^{n}\lambda_i + (n-1)\sum_{i=2}^n\lambda_i^2.
\]
Next we expand the right-hand side of the identity
\[
|\circo{h}|^2 = \frac{1}{2n}\sum_{i,j}(\lambda_i - \lambda_j)^2
\]
to obtain
\begin{align*}
|\circo{h}|^2 &= \frac{n-1}{n}\lambda_1^2 -\frac{2}{n}\lambda_1\sum_{i=2}^{n}\lambda_i + \frac{1}{n}\sum_{i=2}^n \lambda_i^2 + \frac{1}{2n}\sum_{i,j = 2}^n(\lambda_i-\lambda_j)^2.
\end{align*}
Combining these two facts, one arrives at the inequality
\[
\frac{n-1}{n}|\circo{h}|^2 - \frac{\tr(h)^2}{n^2} \geq \lambda_1^2 - \frac{2}{n}\lambda_1\tr(h).
\]
which asserts that $\lambda_1$ lies between the two roots of a quadratic. Solving for the roots yields
\[
\frac{\tr(h)}{n} - \sqrt{\frac{n-1}{n}}|\circo{h}| \leq \lambda_1 \leq \frac{\tr(h)}{n} + \sqrt{\frac{n-1}{n}}|\circo{h}|.
\]
Since our labelling of the eigenvalues was chosen arbitrarily, this chain of inequalities applies to all of them. 
\end{proof}

\section{Evolution of the pinching quantity}\label{sec pinch}

In this section we derive a lower bound for $(\partial_t - \Delta)Q$ along a solution to the mean curvature flow. In particular, the calculation will show that $Q > 0$ is preserved by the flow for $c \leq \frac{4}{3n}$. This is not a new result, and to derive the planarity estimate it would suffice to simply quote the calculations carried out in \cite{Andrews--Baker} and \cite{Naff}. However we have opted for an alternative derivation in which the threshold $\frac{4}{3n}$ appears rather naturally. 

We first estimate the reaction terms in the evolution equation for $|A|^2$. 
\begin{lemma}
At a point where $|H| > 0$ we have
\[
|A\cdot A|^2 + |R^\perp|^2 \leq |h|^4 + 4|\circo{h}|^2|A^-|^2 + 2|A^-|^4
\]
and hence
\[
\frac{1}{2}(\partial_t - \Delta)|A|^2 \leq |h|^4 + 4|\circo{h}|^2|A^-|^2 + 2|A^-|^4 - |\nabla A|^2.
\]
\end{lemma}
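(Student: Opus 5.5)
We work in a normal frame $\nu_1 = \nu, \nu_2, \dots, \nu_m$ and write $A_\alpha = \langle A, \nu_\alpha\rangle$, so that $A_1 = h$ and $A_\alpha = A^-_\alpha$ for $\alpha > 1$. Each $A^-_\alpha$ is traceless. Both reaction terms expand in this frame as
\[
|A\cdot A|^2 = \sum_{\alpha,\beta}\langle A_\alpha, A_\beta\rangle^2, \qquad |R^\perp|^2 = \sum_{\alpha,\beta}|[A_\alpha,A_\beta]|^2,
\]
the second by the Ricci equations. We sort the terms according to how many indices equal $1$.

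\emph{Both indices equal to $1$.} This contributes $|h|^4$ to $|A\cdot A|^2$. The commutator $[h,h]$ vanishes, so it contributes nothing to $|R^\perp|^2$.

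\emph{Exactly one index equal to $1$.} This gives
\[
\sum_{\alpha>1}\Big(2\langle h, A^-_\alpha\rangle^2 + 2|[h,A^-_\alpha]|^2\Big).
\]
The inner product vanishes, because $\langle h, A^-_\alpha\rangle = \langle A, \nu_\alpha\rangle$ paired with $h$ is the mixed term
\[
\langle A_1, A_\alpha\rangle = \tfrac{1}{|H|}\langle \langle A,H\rangle, A_\alpha\rangle.
\]
That is not automatically zero, so we do not rely on its vanishing. We apply the second part of Lemma~\ref{commutator} with $B = h$ and $C = A^-_\alpha$, which is traceless, but scaled appropriately. Lemma~\ref{commutator} controls $2\langle B,C\rangle^2 + |[B,C]|^2$ by $2|\circo{h}|^2|C|^2$. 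Our term is $2\langle B,C\rangle^2 + 2|[B,C]|^2$, which is at most twice the lemma's left-hand side. Hence it is bounded by $4|\circo{h}|^2|A^-_\alpha|^2$, and summing over $\alpha > 1$ yields $4|\circo{h}|^2|A^-|^2$.

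\emph{Both indices greater than $1$.} These terms are
\[
\sum_{\alpha,\beta>1}\big(\langle A^-_\alpha,A^-_\beta\rangle^2 + |[A^-_\alpha,A^-_\beta]|^2\big) = |A^-\cdot A^-|^2 + \sum_{\alpha,\beta>1}|[A^-_\alpha,A^-_\beta]|^2 .
\]
We need this to be at most $2|A^-|^4$. We first rotate the frame $\nu_2,\dots,\nu_m$ so that the Gram matrix $\langle A^-_\alpha, A^-_\beta\rangle$ is diagonal. Then the inner product terms sum to $\sum_\alpha |A^-_\alpha|^4$. For the commutator terms with $\alpha\neq\beta$, the first part of Lemma~\ref{commutator} applies: since $\langle A^-_\alpha,A^-_\beta\rangle = 0$, it gives $|[A^-_\alpha,A^-_\beta]|^2 \le 2|A^-_\alpha|^2|A^-_\beta|^2$. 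Summing the diagonal and off-diagonal contributions gives
\[
\sum_\alpha|A^-_\alpha|^4 + 2\sum_{\alpha\neq\beta}|A^-_\alpha|^2|A^-_\beta|^2 \le 2\Big(\sum_\alpha |A^-_\alpha|^2\Big)^2 = 2|A^-|^4.
\]
This is the Li–Li type estimate in the codimension-$(m-1)$ bundle.

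\emph{Conclusion.} Adding the three groups yields the first inequality. The second inequality follows immediately by substituting the first into the evolution equation for $|A|^2$ recalled in Section~\ref{sec prelim}.

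The only delicate point is the middle group. There the mixed inner products $\langle h, A^-_\alpha\rangle$ need not vanish, and the factor of $2$ in front of the commutator is what makes the constant $4$ appear. The diagonalisation of the Gram matrix in the last group is routine.
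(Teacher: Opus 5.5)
Your proof is correct and follows the paper's argument. You use the same splitting according to how many normal indices equal $1$, the traceless case of Lemma~\ref{commutator} (with the extra factor of $2$) for the mixed terms, and the general case for the terms with $\alpha,\beta>1$. Two differences are only cosmetic. Diagonalising the Gram matrix is unnecessary, since the paper simply bounds $\langle A_\alpha,A_\beta\rangle^2 + |[A_\alpha,A_\beta]|^2 \le 2\langle A_\alpha,A_\beta\rangle^2 + |[A_\alpha,A_\beta]|^2 \le 2|A_\alpha|^2|A_\beta|^2$ for every pair and sums. You should also delete the self-contradictory sentence asserting that $\langle h, A^-_\alpha\rangle$ vanishes: it need not vanish, and you rightly never use it.
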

\begin{proof}
We first expand
\begin{align*}
|A\cdot A|^2 &=|h|^4 + 2\sum_{\alpha > 1} \langle h, A_\alpha\rangle^2 + \sum_{\alpha,\beta > 1} \langle A_{\alpha}, A_{\beta}\rangle^2.
\end{align*}
Next, using the Ricci equation in the form
\[
\langle R^\perp(\cdot, \cdot)\nu_\alpha,\nu_\beta\rangle = [A_\beta, A_\alpha],
\]
we obtain
\[
|R^\perp|^2 = \sum_{\alpha, \beta}|[A_\alpha, A_\beta]|^2 = 2\sum_{\alpha > 1} |[h, A_\alpha]|^2 + \sum_{\alpha, \beta > 1} |[A_\alpha,A_\beta]|^2.
\]
Putting these two identities together,
\begin{align*}
|A\cdot A|^2 + |R^\perp|^2 &= |h|^4 + 2\sum_{\alpha > 1} \langle h, A_\alpha\rangle^2 +2\sum_{\alpha > 1} |[h, A_\alpha]|^2\\
&\qquad + \sum_{\alpha,\beta > 1} \langle A_{\alpha}, A_{\beta}\rangle^2 + \sum_{\alpha, \beta > 1} |[A_\alpha,A_\beta]|^2.
\end{align*} 
We apply Lemma~\ref{commutator}, using the fact that $A_\alpha$ is traceless for each $\alpha > 1$, to bound
\begin{align*}
&2\sum_{\alpha > 1} \langle h, A_\alpha\rangle^2 + 2\sum_{\alpha > 1} |[h, A_\alpha]|^2 \leq 4|\circo{h}|^2|A^-|^2.
\end{align*}
We also apply Lemma~\ref{commutator} for each pair $\alpha, \beta > 1$ to bound
\begin{align*}
\sum_{\alpha,\beta > 1} \langle A_{\alpha}, A_{\beta}\rangle^2 + \sum_{\alpha, \beta > 1} |[A_\alpha,A_\beta]|^2 \leq 2|A^-|^4.
\end{align*}
Combining these inequalities, the claim follows.
\end{proof}

Next we estimate the reaction terms appearing in the evolution equation for $Q$.

\begin{lemma}\label{Reaction Q}
At a point where $|H| > 0$ we have
\begin{align*}
\frac{1}{2}(\partial_t - \Delta)Q &\geq |h|^2Q + 3|A^-|^2Q + \bigg(\frac{4}{n} - 3c\bigg)|A^-|^2 |H|^2 + |A^-|^4\\
&\qquad + |\nabla A|^2 - c|\nabla H|^2.
\end{align*}
\end{lemma}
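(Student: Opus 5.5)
The plan is to start from the evolution equation for $Q$ recorded in Section~\ref{sec prelim},
\[
\frac{1}{2}(\partial_t - \Delta)Q = c|\langle A, H\rangle|^2 - |A\cdot A|^2 - |R^\perp|^2 - c|\nabla H|^2 + |\nabla A|^2,
\]
and bound the reaction terms $|A\cdot A|^2+|R^\perp|^2$ from above by the preceding lemma. The gradient terms $|\nabla A|^2 - c|\nabla H|^2$ already appear verbatim in the claimed inequality, so they are carried along untouched. What remains is a purely algebraic comparison of zeroth-order terms, and I expect it to be an exact identity.

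\textbf{Step 1: rewrite the first reaction term.} Since $\langle A, H\rangle = |H|\langle A,\nu\rangle = |H|h$, we have $c|\langle A,H\rangle|^2 = c|H|^2|h|^2$. Combining this with the preceding lemma gives
\[
\frac{1}{2}(\partial_t - \Delta)Q \geq c|H|^2|h|^2 - |h|^4 - 4|\circo{h}|^2|A^-|^2 - 2|A^-|^4 + |\nabla A|^2 - c|\nabla H|^2 .
\]

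\textbf{Step 2: trade $|H|^2$ for $Q$.} The orthogonal splitting $A = h\nu + A^-$ gives $|A|^2 = |h|^2 + |A^-|^2$, and therefore $c|H|^2 = Q + |h|^2 + |A^-|^2$. Substituting,
\[
c|H|^2|h|^2 - |h|^4 = |h|^2 Q + |h|^2|A^-|^2 .
\]
The right-hand side of Step~1 thus becomes $|h|^2Q + |A^-|^2\big(|h|^2 - 4|\circo{h}|^2 - 2|A^-|^2\big)$ plus the gradient terms. Since $\tr h = |H|$, we can write $|h|^2 = \frac{1}{n}|H|^2 + |\circo{h}|^2$, and the bracket equals
\[
\tfrac{1}{n}|H|^2 - 3|\circo{h}|^2 - 2|A^-|^2 .
\]

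\textbf{Step 3: match with the target.} Expand the target's extra terms using $Q = c|H|^2 - |A|^2$ and $|A|^2 = \frac{1}{n}|H|^2 + |\circo{h}|^2 + |A^-|^2$:
\[
3Q + \Big(\tfrac{4}{n}-3c\Big)|H|^2 + |A^-|^2 = \tfrac{1}{n}|H|^2 - 3|\circo{h}|^2 - 2|A^-|^2 .
\]
The $c$-dependence cancels, and this is exactly the bracket from Step~2. Multiplying by $|A^-|^2$ and adding $|h|^2Q$ and the gradient terms gives precisely the claimed lower bound, so no inequality is lost beyond the preceding lemma.

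The only genuine estimate is the preceding lemma, which is assumed. The main point of care is the bookkeeping in Steps~2 and~3: using the correct decompositions of $|A|^2$ and $|h|^2$, and the fact that $\tr h = |H|$. I expect no real obstacle. Note that the hypothesis $c\le\frac{4}{3n}$ is not used here; it enters only later, through the sign of the coefficient $\frac{4}{n}-3c$.
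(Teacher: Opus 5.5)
Your proposal is correct and follows the same route as the paper: bound $|A\cdot A|^2+|R^\perp|^2$ by the preceding lemma, use $c|\langle A,H\rangle|^2=c|H|^2|h|^2$, then substitute $|h|^2=\frac{1}{n}|H|^2+|\circo{h}|^2$ and $c|H|^2=Q+|h|^2+|A^-|^2$ to collect the factors of $Q$. Your bookkeeping checks out, and the remaining zeroth-order comparison is indeed an exact identity.
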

\begin{proof}
Using the previous lemma we obtain the estimate
\begin{align*}
&c|h|^2|H|^2 - |A\cdot A|^2 - |R^\perp|^2\\
&\qquad \geq |h|^2(c|H|^2 - |h|^2) - 4|\circo{h}|^2|A^-|^2 - 2|A^-|^4.
\end{align*}
Inserting $|\circo{h}|^2 = |h|^2 - \frac{1}{n}|H|^2$, and collecting terms to find factors of $Q$, the right-hand side becomes
\begin{align*}
|h|^2Q + 3|A^-|^2Q + \bigg(\frac{4}{n} - 3c\bigg)|A^-|^2 |H|^2 + |A^-|^4.
\end{align*}
The claim follows. 
\end{proof} 

Combining Lemma~\ref{Reaction Q} with the Kato inequality $\frac{n+2}{3}|\nabla A|^2 \geq |\nabla H|^2$, we see that if $c\leq \frac{4}{3n}$ then at points where $|H| > 0$ we have
\[
\frac{1}{2}(\partial_t - \Delta)Q \geq |h|^2Q + 3|A^-|^2Q + \frac{5n-8}{9n}|\nabla A|^2.
\]
Since $Q > 0$ forces $|H| > 0$, the parabolic maximum principle implies that $Q >0$ is preserved if $c \leq \frac{4}{3n}$. Thus we recover the preservation of pinching theorem in \cite{Andrews--Baker}, as claimed at the beginning of this section.

\section{Estimates for gradient terms}\label{sec grad}
The good Bochner term $|\nabla A^-|^2$ which appears in \eqref{evol A-} can be split as
\[
|\nabla A^-|^2 = |\langle \nabla A^-, \nu\rangle|^2 + |(\nabla A^-)^-|^2.
\]
The main technical advance of the present article, which allows us to prove Theorem~\ref{main}, is a sharp lower bound for $|(\nabla A^-)^-|^2$
in terms of $h$ and $\nabla \nu$. We first state a general version of this new estimate in Proposition~\ref{lb (nabla A-)-}, before deriving the specific form we will need later in Corollary~\ref{cor1 lb (nabla A-)-}. 

To motivate the estimate, we recall that the tensor
\[
T = (\nabla A)^- = \nabla \nu \otimes h + (\nabla A^-)^-
\]
is totally symmetric because of the Codazzi equations. By comparing its different traces one observes (as in \cite[Proposition~2.5]{Naff}) that 
\[
(\nabla_j A^-_{ji})^- = |H|\nabla_i \nu - h_{ij} \nabla_j \nu.
\]
Passing to an orthonormal frame where $h$ is diagonal and applying Cauchy--Schwarz, we obtain the crude bound
\[
|(\nabla A^-)^-|^2 \geq \frac{1}{n} \sum_{i}(|H| - \lambda_i)^2|\nabla_i\nu|^2.
\]
This is the kind of inequality we need to exploit the Bochner term in \eqref{evol A-}, but since precise numerology will be important, it is desirable to have a sharp version in which the constant on the right-hand side is optimal. The following proposition provides this sharp inequality. 

\begin{proposition}\label{lb (nabla A-)-}
For an arbitrary $n$-dimensional submanifold of Euclidean space, at a point where $|H| > 0$, we have
\begin{align*}
|(\nabla A^-)^-|^2 \geq \frac{2n}{(n+2)(n-1)}\sum_i(|H| - \lambda_i)^2|\nabla_i \nu|^2
\end{align*} 
in any orthonormal frame such that $h$ is diagonal with entries $\lambda_i$.
\end{proposition}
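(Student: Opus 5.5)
The plan is to work in an orthonormal frame $\{e_i\}$ in which $h$ is diagonal, and to write $W = (\nabla A^-)^-$, viewed as a $(0,3)$ tensor $W_{kij} = (\nabla_k A^-_{ij})^-$ with values in $\nu^\perp$. I will record three structural facts about $W$:
\begin{itemize}
\item $W$ is symmetric in its last two indices.
\item $W$ is traceless in its last two indices, $\sum_i W_{kii} = 0$, because $A^-$ is traceless and projecting onto $\nu^\perp$ preserves this.
\item Since $T = \nabla\nu\otimes h + W$ is totally symmetric by the Codazzi equations, antisymmetrising in the first two indices gives
\[
W_{kij} - W_{ikj} = \lambda_j\big(\delta_{kj}\nabla_i\nu - \delta_{ij}\nabla_k\nu\big).
\]
\end{itemize}
In particular, $W$ is symmetric in all indices on triples of distinct indices, and $W_{kii} - W_{iki} = -\lambda_i\nabla_k\nu$ for $i\neq k$.

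Next I will group the components of $W$ according to the index $k$ that is distinguished. For fixed $k$, set
\[
w = W_{kkk}, \qquad a_i = W_{kii}, \qquad b_i = W_{iki} = W_{iik} \qquad (i\neq k).
\]
Discarding the components with three distinct indices, and noting that $b_i$ occurs twice in $|W|^2$ (as $W_{iki}$ and $W_{iik}$), we get
\[
|W|^2 \geq \sum_k \Big(|w|^2 + \sum_{i\neq k}\big(|a_i|^2 + 2|b_i|^2\big)\Big).
\]
The constraints on a single group are:
\begin{itemize}
\item the trace condition $w + \sum_{i\neq k} a_i = 0$;
\item the relation $a_i = b_i - \lambda_i\nabla_k\nu$.
\end{itemize}
Summing the second relation over $i \neq k$ and using the first yields the key linear constraint (equivalently, the trace identity $(\nabla_jA^-_{jk})^- = (|H|-\lambda_k)\nabla_k\nu$ recalled before the proposition)
\[
w + \sum_{i\neq k} b_i = s_k := (|H|-\lambda_k)\nabla_k\nu .
\]
It is important that each group involves only its own variables, so the groups decouple and the bound can be proved one $k$ at a time. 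This separation of components is, I expect, exactly what the crude Cauchy--Schwarz bound fails to exploit.

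For a fixed $k$, I will then forget the $\lambda$-dependent relation $a_i = b_i - \lambda_i\nabla_k\nu$ and keep only the two linear constraints
\[
w + \sum_{i\neq k} a_i = 0, \qquad w + \sum_{i\neq k} b_i = s_k .
\]
I will minimise the positive definite quadratic $|w|^2 + \sum_{i\neq k}(|a_i|^2 + 2|b_i|^2)$ subject to these. By convexity and symmetry in $i$, the minimiser has $a_i = a$ and $b_i = b$. The problem then reduces to minimising
\[
|w|^2 + \frac{|w|^2}{n-1} + \frac{2|s_k - w|^2}{n-1}
\]
over $w$. The minimum is attained at $w = \tfrac{2}{n+2}s_k$, with value $\tfrac{2n}{(n+2)(n-1)}|s_k|^2$. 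The computation works componentwise in $\nu^\perp$, or alternatively via Cauchy--Schwarz with weights.

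Summing over $k$ gives the proposition. The main point to double-check is the bookkeeping in the second step: that $b_i$ genuinely contributes twice to $|W|^2$, that the index patterns $(k,k,k)$, $(k,i,i)$, $(i,k,i)$, $(i,i,k)$ over all $k$ and $i \neq k$ partition the components with a repeated index without overlap, and that the trace condition used for group $k$ involves only that group's variables. Sharpness is not needed for the statement. However, one can check it by taking $\lambda_i$ constant, so that the discarded relation is compatible with the minimiser. This confirms that dropping that relation costs nothing in the constant.
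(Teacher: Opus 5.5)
Your proof is correct, and it takes a genuinely different route from the paper's.

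The paper works with an orthogonal decomposition. It splits $(\nabla A^-)^-=(T-B)+(B-\nabla\nu\otimes h)$, where $B$ is the totally symmetric part of $\nabla\nu\otimes h$. It applies the Huisken-type trace decomposition to $T-B$ and computes $|B-\nabla\nu\otimes h|^2=\frac23\sum_j|\nabla_j\nu|^2\sum_{i\neq j}\lambda_i^2$ exactly. It then uses the eigenvalue bound $\sum_{i\neq j}\lambda_i^2\geq\frac{1}{n-1}(|H|-\lambda_j)^2$, so the constant arises as the sum $\frac{4}{3(n+2)}+\frac{2}{3(n-1)}$.

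You instead work componentwise. You group the components by their singleton index and discard both the distinct-index components and the relation $a_i=b_i-\lambda_i\nabla_k\nu$. What you keep is:
\begin{itemize}
\item the symmetry of $W$ in its last two slots;
\item the tracelessness $\sum_i W_{kii}=0$;
\item the single trace identity $\sum_i W_{iik}=(|H|-\lambda_k)\nabla_k\nu$.
\end{itemize}
The constant then arises as $\frac{\alpha\beta}{\alpha+\beta}$ with $\alpha=\frac{n}{n-1}$ and $\beta=\frac{2}{n-1}$. Your bookkeeping is right: each component with a repeated index lies in exactly one group, and $W_{iki}=W_{iik}$.

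Your route is more economical, and it shows exactly where the crude $\frac1n$ bound loses. That bound uses only the second trace identity. Counting each $b_i$ twice and tying $w$ to the $a_i$ through tracelessness already gives the sharp constant. The eigenvalues enter only through $s_k$, and no Cauchy--Schwarz on the $\lambda_i$ is needed.

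The paper's route instead gives an exact identity with explicit nonnegative remainders: $|Z|^2$ and the defect in the eigenvalue Cauchy--Schwarz. This makes the equality configuration in the subsequent Remark transparent and mirrors Huisken's Kato argument.

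Your sharpness comment is consistent with this, though slightly imprecise. When $\nabla_k\nu\neq0$, the discarded relation is compatible with your minimiser exactly when the $\lambda_i$ with $i\neq k$ all coincide. That is the case in the paper's example $h=\operatorname{diag}(0,1,\dots,1)$ as well as the umbilic case.
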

\begin{proof}
To begin with, we split $(\nabla A^-)^-$ into its totally symmetric part and an orthogonal defect. Because of the Codazzi equations, we already know that 
\[
(\nabla A^-)^- =T - \nabla \nu \otimes h
\]
where $T$ is totally symmetric. Therefore, it suffices to split off the totally symmetric part of $\nabla \nu \otimes h$, which is given by
\[
B_{ijk} := \frac{1}{3}(h_{jk} \nabla_i \nu + h_{ij} \nabla_k \nu + h_{ki} \nabla_j \nu).
\]
The desired splitting of $(\nabla A^-)^-$ is then
\[
(\nabla A^-)^- =  (T - B) + (B - \nabla\nu\otimes h).
\]
Since the two components are orthogonal, we have
\[
|(\nabla A^-)^-|^2 = |T - B|^2 + |B - \nabla \nu \otimes h|^2.
\]

As in Lemma~\ref{Kato} we can perform a further orthogonal decomposition of $T-B$ into its trace and trace-free parts. Denoting the latter by $Z$, this gives
\[
|(\nabla A^-)^-|^2 = \frac{3}{n+2}|\tr(T) - \tr(B)|^2 + |B - \nabla \nu \otimes h|^2 + |Z|^2.
\]

Next we compute the two terms $|\tr(T) - \tr(B)|^2$ and $|B - \nabla \nu \otimes h|^2$ using an orthonormal frame in which $h$ is diagonal with entries $\lambda_i$. Since
\[
\tr(T)_i = |H|\nabla_i \nu, \qquad \tr(B)_i = \frac{1}{3}(|H|\nabla_i \nu + 2h_{ij}\nabla_j\nu),
\]
we find that 
\[
\tr(T)_i - \tr(B)_i = \frac{2}{3} |H|\nabla_i \nu - \frac{2}{3}h_{ij}\nabla_j \nu,
\]
and hence
\[
|\tr(T) - \tr(B)|^2 = \frac{4}{9}\sum_i(|H| - \lambda_i)^2|\nabla_i \nu|^2.
\]
In addition, we have
\[
|B - \nabla \nu \otimes h|^2 = \frac{2}{3}|h|^2 |\nabla \nu|^2 - \frac{2}{3}\sum_i \lambda_i^2 |\nabla_i \nu|^2 = \frac{2}{3}\sum_j\bigg(|\nabla_j \nu|^2\sum_{i\not= j} \lambda_i^2\bigg).
\]
It follows that
\[
|(\nabla A^-)^-|^2 = \frac{12}{9(n+2)}\sum_i(|H| - \lambda_i)^2|\nabla_i \nu|^2 + \frac{2}{3}\sum_j\bigg(|\nabla_j \nu|^2\sum_{i\not= j} \lambda_i^2\bigg) + |Z|^2.
\]

To conclude we use the Cauchy--Schwarz inequality to bound
\[
\sum_j\bigg(|\nabla_j \nu|^2\sum_{i\not= j} \lambda_i^2\bigg) \geq \frac{1}{(n-1)}\sum_j (|H|-\lambda_j)^2|\nabla_j \nu|^2,
\]
and thus obtain
\[
|(\nabla A^-)^-|^2 \geq \frac{2n}{(n+2)(n-1)}\sum_i(|H| - \lambda_i)^2|\nabla_i \nu|^2 + |Z|^2.
\] 
The claim follows after we discard the nonnegative term $|Z|^2$. 
\end{proof}
\begin{remark}
To see that the constant in Proposition~\ref{lb (nabla A-)-} is sharp, we describe a configuration in which all of the inequalities in the proof are saturated. This occurs when $h = \operatorname{diag}(0,1,\dots, 1)$; the one-form $\tr(T)$, and hence $\nabla \nu = \tr(T)/|H|$, is concentrated along $e_1$; and the trace-free part of $T$ coincides with that of $B$. To see that such a configuration really occurs, note that we can prescribe $h$ and $T$ at a point, and the two requirements on $T$ concern its trace- and trace-free parts, which are independent of each other and so can be prescribed individually. 
\end{remark}

Next we derive a straightforward consequence of Proposition~\ref{lb (nabla A-)-}. The key point is to estimate the terms $|H| - \lambda_i$ appearing there so that they can easily be controlled using our quadratic pinching hypothesis. 

\begin{corollary}\label{cor1 lb (nabla A-)-}
For an arbitrary $n$-dimensional submanifold of Euclidean space, at a point where $|h|^2 < |H|^2$, we have
\begin{align*}
|(\nabla A^-)^-|^2 &\geq \frac{2}{n+2}\bigg(\sqrt{\frac{n-1}{n}}|H| - |\circo{h}|\bigg)^2|\nabla \nu|^2.
\end{align*} 
\end{corollary}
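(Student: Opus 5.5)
We will derive this directly from Proposition~\ref{lb (nabla A-)-}, by bounding each factor $(|H|-\lambda_i)^2$ from below uniformly in $i$. In an orthonormal frame diagonalising $h$, the Proposition gives
\[
|(\nabla A^-)^-|^2 \geq \frac{2n}{(n+2)(n-1)}\sum_i(|H| - \lambda_i)^2|\nabla_i \nu|^2,
\]
and $\sum_i |\nabla_i\nu|^2 = |\nabla\nu|^2$. It therefore suffices to show that, for every $i$,
\[
|H| - \lambda_i \geq \sqrt{\frac{n-1}{n}}\bigg(\sqrt{\frac{n-1}{n}}|H| - |\circo{h}|\bigg) = \frac{n-1}{n}|H| - \sqrt{\frac{n-1}{n}}|\circo{h}|,
\]
and that the right-hand side is nonnegative. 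Squaring then gives the factor $\frac{n-1}{n}$, and $\frac{2n}{(n+2)(n-1)}\cdot\frac{n-1}{n} = \frac{2}{n+2}$, which is exactly the claimed constant.

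For the pointwise bound we use Lemma~\ref{eigenvalue est} with $\tr(h) = \langle H,\nu\rangle = |H|$. It gives $\lambda_i \leq \frac{|H|}{n} + \sqrt{\frac{n-1}{n}}|\circo{h}|$, so
\[
|H| - \lambda_i \geq \frac{n-1}{n}|H| - \sqrt{\frac{n-1}{n}}|\circo{h}|,
\]
which is precisely the displayed bound.

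The only delicate step is the sign, since we must square an inequality between quantities whose right-hand side might be negative. This is where the hypothesis $|h|^2 < |H|^2$ enters. From $|h|^2 = |\circo{h}|^2 + \frac{1}{n}|H|^2$ it gives $|\circo{h}|^2 < \frac{n-1}{n}|H|^2$, i.e.\ $\sqrt{\frac{n-1}{n}}|H| - |\circo{h}| > 0$. Hence the lower bound for $|H|-\lambda_i$ is positive, squaring is legitimate, and
\[
(|H|-\lambda_i)^2 \geq \frac{n-1}{n}\bigg(\sqrt{\frac{n-1}{n}}|H| - |\circo{h}|\bigg)^2 .
\]
We then sum against $|\nabla_i\nu|^2$ and insert the result into Proposition~\ref{lb (nabla A-)-}, which completes the argument.
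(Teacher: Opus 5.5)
Your proposal is correct and follows essentially the same route as the paper. Both apply Lemma~\ref{eigenvalue est} with $\tr(h)=|H|$ to bound $|H|-\lambda_i$ below by $\frac{n-1}{n}|H|-\sqrt{\frac{n-1}{n}}|\circo{h}|$. Both then use $|h|^2<|H|^2$ to make this bound positive before squaring, and insert the result into Proposition~\ref{lb (nabla A-)-}.
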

\begin{proof}
We consider the sum 
\[
\sum_i(|H| - \lambda_i)^2|\nabla_i \nu|^2
\]
which appears in Proposition~\ref{lb (nabla A-)-}. Using Lemma~\ref{eigenvalue est}, we see that $|H| - \lambda_i$ can be no smaller than
\[
\frac{n-1}{n}|H| - \sqrt{\frac{n-1}{n}}|\circo{h}|,
\]
which is a positive number since $|h|^2 < |H|^2$ by assumption. It follows that
\[
\sum_i(|H| - \lambda_i)^2|\nabla_i \nu|^2 \geq \frac{n-1}{n}\bigg(\sqrt{\frac{n-1}{n}}|H| - |\circo{h}|\bigg)^2|\nabla \nu|^2,
\]
and this yields the claim when combined with Proposition~\ref{lb (nabla A-)-}. 
\end{proof}

To conclude this section we record a technical lemma, which is a consequence of Corollary~\ref{cor1 lb (nabla A-)-}, and will be used in the sequel. 

\begin{corollary}\label{cor2 lb (nabla A-)-}
Assuming $n \geq 5$ and $Q = c|H|^2 - |A|^2 > 0$ with $c \leq \frac{4}{3n}$, we have
\[\frac{1}{5n-8}(\sqrt{6(n-1)}\sqrt{Q} + \sqrt{n+2}|\circo{h}|)^2|\nabla \nu|^2-\theta^2|(\nabla A^-)^-|^2 \leq 0\]
whenever $1 \geq \theta \geq \sqrt{14/15}$.
\end{corollary}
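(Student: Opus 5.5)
The plan is to derive this from Corollary~\ref{cor1 lb (nabla A-)-} together with an elementary optimisation over the normalised size of $|\circo{h}|$.

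First we check that Corollary~\ref{cor1 lb (nabla A-)-} applies. Since $|h|^2 \leq |A|^2 < c|H|^2 \leq \frac{4}{3n}|H|^2 < |H|^2$, we obtain
\[
\theta^2|(\nabla A^-)^-|^2 \geq \frac{2\theta^2}{n+2}\bigg(\sqrt{\tfrac{n-1}{n}}|H| - |\circo{h}|\bigg)^2|\nabla\nu|^2 .
\]
The bracket is positive because $|\circo{h}|^2 \leq (c-\frac1n)|H|^2 \leq \frac{1}{3n}|H|^2$. It therefore suffices to prove the scalar inequality
\[
\sqrt{n+2}\,\big(\sqrt{6(n-1)}\sqrt{Q} + \sqrt{n+2}\,|\circo{h}|\big) \leq \theta\sqrt{2(5n-8)}\,\bigg(\sqrt{\tfrac{n-1}{n}}|H| - |\circo{h}|\bigg),
\]
after which we square both sides and multiply by $|\nabla\nu|^2/((5n-8)(n+2))$.

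Next we eliminate $Q$ using the pinching. Writing $|A|^2 = \frac1n|H|^2 + |\circo{h}|^2 + |A^-|^2$ and $c \leq \frac{4}{3n}$ gives $Q \leq \frac{1}{3n}|H|^2 - |\circo{h}|^2$. By scale invariance we may normalise $|H| = 1$ and set $x = |\circo{h}| \in [0, 1/\sqrt{3n}]$. Moving the $x$-term on the right to the left, the inequality reduces to
\[
a\sqrt{\tfrac{1}{3n} - x^2} + b\,x \leq \theta\sqrt{2(5n-8)}\sqrt{\tfrac{n-1}{n}},
\]
where $a = \sqrt{6(n-1)(n+2)}$ and $b = (n+2) + \theta\sqrt{2(5n-8)}$. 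Since $a\sqrt{r^2 - x^2} + bx \leq r\sqrt{a^2+b^2}$ by Cauchy--Schwarz, and here $r = 1/\sqrt{3n}$, squaring shows it is enough to verify
\[
6(n-1)(n+2) + \big((n+2) + \theta\sqrt{2(5n-8)}\big)^2 \leq 6\theta^2(5n-8)(n-1).
\]

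The main obstacle is checking this last polynomial-type inequality for all $n \geq 5$ and all $\theta \in [\sqrt{14/15}, 1]$.

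To handle the dependence on $\theta$, write the difference RHS $-$ LHS as
\[
\theta^2\big(6(5n-8)(n-1) - 2(5n-8)\big) - 2\theta(n+2)\sqrt{2(5n-8)} - (n+2)^2 - 6(n-1)(n+2).
\]
This quadratic in $\theta$ has positive leading coefficient, and its vertex lies at a small value of $\theta$, well below $\sqrt{14/15}$. Hence the difference is increasing on $[\sqrt{14/15}, 1]$, and it suffices to take $\theta^2 = 14/15$.

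At $n = 5$ and $\theta^2 = 14/15$, the left side is about $168 + (7 + 4.93)^2 \approx 310$, while the right side is about $381$, so the inequality holds. For general $n$, the leading terms are roughly $7n^2$ on the left against $28n^2$ on the right. Writing the difference as a function of $n$, one checks it is increasing for $n \geq 5$, for instance by bounding $\sqrt{2(5n-8)} \leq \sqrt{10n}$ and comparing coefficients. I expect this numerology to be tedious but routine.
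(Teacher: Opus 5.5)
Your proposal is correct and is essentially the paper's proof: after invoking Corollary~\ref{cor1 lb (nabla A-)-} and using Cauchy--Schwarz with the bound $Q + |\circo{h}|^2 \leq \frac{1}{3n}|H|^2$, your final inequality is exactly the paper's condition $p^2+q^2 \leq \frac{6(n-1)}{n+2}\theta^2$ multiplied through by $(n+2)(5n-8)$. The step you leave open, monotonicity in $n$, is immediate in the paper's normalised form, where $p^2=\frac{6(n-1)}{5n-8}$ and $q^2$ are visibly decreasing and the right side increasing in $n$. Also, at $n=5$ one has $\theta\sqrt{34}\approx 5.63$ rather than $4.93$, so your left side is about $328$, which is still below $381$.
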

\begin{proof}
Let us define 
\[
a = \frac{1}{\sqrt{5n-8}}(\sqrt{6(n-1)}\sqrt{Q} + \sqrt{n+2}|\circo{h}|)\]
and 
\[
b = \sqrt{\frac{2}{n+2}}\bigg(\sqrt\frac{n-1}{n}|H| - |\circo{h}|\bigg).
\]
Since $|(\nabla A^-)^-|^2 \geq b^2|\nabla \nu|^2$ by Corollary~\ref{cor1 lb (nabla A-)-}, the claim follows if we can establish that
\[
a^2 - \theta^2 b^2 \leq 0.
\]
Clearly $a \geq 0$, and also $b \geq 0$ since our pinching hypothesis implies
\[|\circo{h}|^2 = |h|^2 - \frac{1}{n}|H|^2 < \frac{1}{3n}|H|^2 < \frac{n-1}{n}|H|^2,\]
so it will suffice to show that
\[
a - \theta b \leq 0. 
\]

We have 
\[
\frac{(a - \theta b)}{|H|} = p\frac{\sqrt{Q}}{|H|} + q\frac{|\circo{h}|}{|H|} - \theta \sqrt{\frac{2(n-1)}{n(n+2)}},
\]
where 
\[
p = \sqrt{\frac{6(n-1)}{5n-8}}, \qquad q = \sqrt{\frac{n+2}{5n-8}} + \theta \sqrt{\frac{2}{n+2}}.
\]
Using the Cauchy--Schwarz inequality and $Q + |\circo{h}|^2 \leq \frac{1}{3n}|H|^2$, we find that
\[
p\frac{\sqrt{Q}}{|H|} + q\frac{|\circo{h}|}{|H|} \leq \sqrt{p^2 + q^2}\sqrt{\frac{Q + |\circo{h}|^2}{|H|^2}} \leq \frac{1}{\sqrt{3n}}\sqrt{p^2 + q^2},
\]
so the inequality $a - \theta b \leq 0$ will follow if we can establish
\[
\frac{1}{\sqrt{3n}}\sqrt{p^2 + q^2} \leq \theta \sqrt{\frac{2(n-1)}{n(n+2)}},
\]
or equivalently
\[
p^2 + q^2 \leq \frac{6(n-1)}{n+2}\theta^2.
\]

It is not difficult to see that the left-hand side of the previous inequality is decreasing in $n$, whereas the right-hand side is increasing. Therefore, it suffices to verify the inequality for $n = 5$. This amounts to checking that
\[
\frac{31}{17} + 2\sqrt{\frac{2}{17}}\theta  + \frac{2}{7}\theta^2 \leq \bigg(3 + \frac{3}{7}\bigg)\theta^2,
\]
which is easily verified if $1 \geq \theta^2 \geq 14/15$, using for example $\frac{31}{17} \leq 2$ and $2\sqrt{\frac{2}{17}} \leq \frac{4}{5}$.
\end{proof}

\begin{remark}
Corollary~\ref{cor2 lb (nabla A-)-} fails if $n \in \{2,3,4\}$. This is the reason why our proof of Theorem~\ref{main} requires $n \geq 5$. 
\end{remark}

\section{Proof of the planarity estimate}\label{sec plan}

Consider a family of closed $n$-dimensional immersions in Euclidean space which are evolving under the mean curvature flow. We assume that $Q = c|H|^2 - |A|^2$ is positive with $c \leq \frac{4}{3n}$. The ratio $P = |A^-|^2/Q^{1-\sigma}$ satisfies
\begin{align*}
(\partial_t - \Delta) P &= \frac{1}{Q^{1-\sigma}}\bigg((\partial_t - \Delta)|A^-|^2 - (1-\sigma)\frac{|A^-|^2}{Q}(\partial_t - \Delta)Q\bigg)\\
&\qquad -\sigma(1-\sigma)P\frac{|\nabla Q|^2}{Q^{2}} + \frac{2}{Q^{1-\sigma}}\langle\nabla Q^{1-\sigma}, \nabla P\rangle.
\end{align*}

Theorem~\ref{main} will follow from the parabolic maximum principle once we establish that, for $\sigma \in (0,1)$ suitably small,
\[
(\partial_t - \Delta)|A^-|^2 - (1-\sigma)\frac{|A^-|^2}{Q}(\partial_t - \Delta)Q \leq 0.
\]
Using our estimate from Lemma~\ref{Reaction Q}, that is
\[
\frac{1}{2}(\partial_t - \Delta) Q \geq |h|^2 Q + 3|A^-|^2Q + |\nabla A|^2 - c|\nabla H|^2, 
\]
and the evolution equation for $|A^-|^2$ recorded in \eqref{evol A-}, we obtain
\[
(\partial_t - \Delta)|A^-|^2 - (1-\sigma)\frac{|A^-|^2}{Q}(\partial_t - \Delta)Q \leq 2 R + 2G,
\]
where 
\[
R := |A^-\cdot A^-|^2 + \sum_{\alpha > 1}|R^\perp(\cdot,\cdot)\nu_\alpha|^2 - (1-\sigma)|A^-|^2(|h|^2 + 3|A^-|^2)
\]
and 
\begin{align*}
G &:= - |\nabla A^-|^2 - 2\langle \nabla h \otimes \nu, \nabla A^-\rangle + 2\frac{h_{ij}}{|H|}\langle \nabla |H| \otimes \nu, \nabla A^-_{ij}\rangle\\
&\qquad - (1-\sigma)\frac{|A^-|^2}{Q}(|\nabla A|^2 - c|\nabla H|^2).
\end{align*}
Our task is therefore reduced to establishing $R \leq 0$ and $G \leq 0$. 

We first estimate the reaction terms $R$. 
\begin{proposition}\label{planarity reaction}
Assuming $Q > 0$ with $c \leq \frac{4}{3n}$ we have
\[
R \leq (\sigma - 1/2)|h|^2|A^-|^2 + (3\sigma - 5/2)|A^-|^4 - \frac{3}{2}|A^-|^2 Q.
\]
\end{proposition}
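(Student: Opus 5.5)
The plan is to expand the two normal-curvature reaction terms in an orthonormal normal frame with $\nu_1 = \nu$, bound them with Lemma~\ref{commutator}, and then regroup the result in terms of $Q$. This is the same strategy as in the proof of Lemma~\ref{Reaction Q}.

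\textbf{Step 1: expansion.} Writing $A_\alpha = \langle A, \nu_\alpha\rangle$ for $\alpha > 1$, so that $A_1 = h$ and each $A_\alpha$ with $\alpha > 1$ is traceless, we have
\[
|A^-\cdot A^-|^2 = \sum_{\alpha,\beta>1}\langle A_\alpha, A_\beta\rangle^2 .
\]
By the Ricci equations,
\[
\sum_{\alpha>1}|R^\perp(\cdot,\cdot)\nu_\alpha|^2 = \sum_{\alpha>1}|[h,A_\alpha]|^2 + \sum_{\alpha,\beta>1}|[A_\alpha,A_\beta]|^2 .
\]

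\textbf{Step 2: bounding the terms.} Lemma~\ref{commutator} gives $\langle B,C\rangle^2+|[B,C]|^2 \le 2|B|^2|C|^2$, which I would apply in two ways.
\begin{itemize}
\item For each pair $\alpha,\beta>1$ it yields
\[
\sum_{\alpha,\beta>1}\big(\langle A_\alpha,A_\beta\rangle^2+|[A_\alpha,A_\beta]|^2\big) \le 2|A^-|^4 .
\]
\item Using the traceless version with $B = h$ and $C = A_\alpha$, and dropping the inner-product term, it yields
\[
\sum_{\alpha>1}|[h,A_\alpha]|^2 \le 2|\circo{h}|^2|A^-|^2 .
\]
\end{itemize}
Together these give
\[
R \le 2|\circo{h}|^2|A^-|^2 + 2|A^-|^4 - (1-\sigma)|A^-|^2\big(|h|^2+3|A^-|^2\big).
\]

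\textbf{Step 3: comparison with the target.} The $\sigma$-terms on the right are exactly $\sigma|h|^2|A^-|^2 + 3\sigma|A^-|^4$, matching those in the claimed bound. After dividing by $|A^-|^2$, it therefore suffices to show
\[
2|\circo{h}|^2 - \tfrac12|h|^2 + \tfrac12|A^-|^2 + \tfrac32 Q \le 0 .
\]
To see this, substitute $|\circo{h}|^2 = |h|^2 - \frac1n|H|^2$ and $Q = c|H|^2 - |h|^2 - |A^-|^2$. The $|h|^2$ terms cancel ($2 - \frac12 - \frac32 = 0$), and the left-hand side collapses to
\[
\Big(\tfrac{3c}{2} - \tfrac{2}{n}\Big)|H|^2 - |A^-|^2 ,
\]
which is nonpositive because $c \le \frac{4}{3n}$.

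\textbf{Main obstacle.} The only delicate point is to use the traceless form of Lemma~\ref{commutator} for the mixed commutators $[h,A_\alpha]$. This is what produces $|\circo{h}|^2$ rather than $|h|^2$. With that choice the final regrouping is pure bookkeeping, and it shows that the threshold $\frac{4}{3n}$ is exactly what makes the residual term nonpositive.
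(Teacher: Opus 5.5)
Your approach is the paper's own: the same expansion in a normal frame with $\nu_1=\nu$, and Lemma~\ref{commutator} on the pairs $\alpha,\beta>1$ to get $2|A^-|^4$. You also use the same bound $2|\circo{h}|^2|A^-|^2$ for the mixed commutators and finish with a substitution using $c\le\frac{4}{3n}$. Steps~1 and~2 are correct. Step~3, however, contains an arithmetic slip that breaks the reduction as written.

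Take your bound $2|\circo{h}|^2|A^-|^2-|h|^2|A^-|^2-|A^-|^4$ and subtract the $\sigma$-free part of the target, $-\tfrac12|h|^2|A^-|^2-\tfrac52|A^-|^4-\tfrac32|A^-|^2Q$. The coefficient of $|A^-|^4$ that remains is $-1+\tfrac52=\tfrac32$, not $\tfrac12$. So what you actually need is
\[
2|\circo{h}|^2-\tfrac12|h|^2+\tfrac32|A^-|^2+\tfrac32Q\le 0 .
\]
Your displayed inequality, with $\tfrac12|A^-|^2$, is weaker than this by $|A^-|^2$. Proving it therefore does not establish the proposition.

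The fix costs nothing. With your substitutions the $|h|^2$ terms cancel as before, and now the $|A^-|^2$ terms cancel as well ($\tfrac32-\tfrac32=0$). What remains is exactly $\bigl(\tfrac{3c}{2}-\tfrac{2}{n}\bigr)|H|^2\le 0$.

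So the spare $-|A^-|^2$ you found does not exist; the estimate is tight in $|A^-|^2$. This is precisely the paper's step $|\circo{h}|^2=(1-\tfrac{1}{cn})|h|^2-\tfrac{1}{cn}(|A^-|^2+Q)\le\tfrac14|h|^2-\tfrac34|A^-|^2-\tfrac34Q$. With this correction your proof is complete and coincides with the paper's. Your closing remark, that $\frac{4}{3n}$ is exactly the threshold making the residual nonpositive, is then accurate.
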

\begin{proof}
Let us fix an orthonormal frame for the normal space, denoted $\nu_\alpha$, such that $\nu_1 = \nu$. Using the Ricci equations and $[h, A_\alpha^-] = [\circo{h},A_\alpha^-]$ we expand
\begin{align*}
&|A^-\cdot A^-|^2 + \sum_{\alpha > 1}|R^\perp(\cdot,\cdot)\nu_\alpha|^2\\
&\qquad = \sum_{\alpha,\beta > 1}\langle A^-_\alpha, A^-_\beta\rangle^2 + \sum_{\alpha,\beta > 1}|[A_\alpha^-, A_\beta^-]|^2 + \sum_{\alpha > 1}|[\circo{h},A_\alpha^-]|^2.
\end{align*}
We estimate the first two terms using Lemma~\ref{commutator}, and the final term using Cauchy--Schwarz, in order to obtain
\[
|A^-\cdot A^-|^2 + \sum_{\alpha > 1}|R^\perp(\cdot,\cdot)\nu_\alpha|^2 \leq 2|A^-|^4 + 2|\circo{h}|^2|A^-|^2. 
\]
Inserting
\begin{align*}
|\circo{h}|^2 &= \bigg(1-\frac{1}{cn}\bigg)|h|^2 - \frac{1}{cn} |A^-|^2 - \frac{1}{cn}Q \leq \frac{1}{4}|h|^2 - \frac{3}{4}|A^-|^2 - \frac{3}{4}Q,
\end{align*}
this becomes
\[
|A^-\cdot A^-|^2 + \sum_{\alpha > 1}|R^\perp(\cdot,\cdot)\nu_\alpha|^2 \leq \frac{1}{2}|A^-|^4 + \frac{1}{2}|h|^2|A^-|^2 -\frac{3}{2}|A^-|^2 Q.
\]
The claim follows by inserting this inequality into the definition of $R$. 
\end{proof}

Next we bound the gradient terms $G$.

\begin{proposition}\label{planarity gradient}
Assuming $n \geq 5$, and that $Q > 0$ with $c \leq \frac{4}{3n}$, we have
\[
G \leq -\frac{1}{29}|\nabla A^-|^2
\]
provided $\sigma \leq 1/30$.
\end{proposition}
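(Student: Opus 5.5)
We rewrite the two cross-terms in $G$ in terms of $\nabla\nu$ and then absorb them using Corollary~\ref{cor2 lb (nabla A-)-}, together with the favourable term coming from Lemma~\ref{Bochner Q}.

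\textbf{Rewriting the cross-terms.} Since $A^-$ is normal to $\nu$, we have $\langle \nu, \nabla_k A^-_{ij}\rangle = -\langle \nabla_k\nu, A^-_{ij}\rangle$. The cross-terms therefore become
\[
2\langle \nabla_k h_{ij}, \langle\nabla_k\nu, A^-_{ij}\rangle\rangle - 2\frac{h_{ij}}{|H|}\nabla_k|H|\,\langle \nabla_k\nu, A^-_{ij}\rangle.
\]
Writing $\nabla_k h_{ij} = S_{kij} - \langle A^-_{ij},\nabla_k\nu\rangle$, which follows from $h=\langle A,\nu\rangle$, the first term becomes $2\langle S, \langle\nabla\nu, A^-\rangle\rangle$ minus a nonnegative square. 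We may simply discard that square.

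Next, $S = U + \frac1n g\,\nabla|H|$, and the trace part pairs to zero with the traceless $A^-$. Combining with the second term and using $h_{ij} - \frac{|H|}{n}g_{ij} = \circo{h}_{ij}$, the total cross-term is bounded by
\[
2\langle U, \langle\nabla\nu,A^-\rangle\rangle - 2\frac{\circo{h}_{ij}}{|H|}\nabla_k|H|\langle\nabla_k\nu,A^-_{ij}\rangle.
\]
Using Lemma~\ref{Kato projections} to trade $|\nabla|H||$ for $\sqrt{n(n+2)/(2(n-1))}\,|U|$, and $|\circo{h}|/|H|$ being small, the whole cross-term is at most
\[
2\,|A^-|\,|\nabla\nu|\,|U|\Big(1 + C_n\frac{|\circo h|}{|H|}\Big).
\]
I would expect to reorganise this so that the coefficient is proportional to $|A^-|\,|\nabla\nu|\,|U|$, with a $\sqrt{Q}$ piece and a $|\circo h|$ piece that match the shape of the expression $\sqrt{6(n-1)}\sqrt{Q}+\sqrt{n+2}|\circo h|$ in Corollary~\ref{cor2 lb (nabla A-)-}.

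\textbf{Absorbing the cross-terms.} The good terms available are:
\begin{itemize}
\item $-|(\nabla A^-)^-|^2$ and $-|\langle\nabla A^-,\nu\rangle|^2$, from the Bochner term;
\item $-(1-\sigma)\frac{|A^-|^2}{Q}\Big(\frac{5n-8}{6(n-1)}|U|^2 + \frac{5n-8}{n+2}(Q+|\circo h|^2+|A^-|^2)|\nabla\nu|^2\Big)$, from Lemma~\ref{Bochner Q}.
\end{itemize}
I would apply Young's inequality to the cross-term with weight chosen so that it is absorbed by the $\frac{5n-8}{6(n-1)}\frac{|A^-|^2}{Q}|U|^2$ term. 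This leaves a remainder of the form
\[
\frac{Q}{5n-8}\cdot\frac{6(n-1)}{Q}\,(\ldots)^2|\nabla\nu|^2,
\]
which is exactly $\frac{1}{5n-8}\big(\sqrt{6(n-1)}\sqrt Q+\sqrt{n+2}|\circo h|\big)^2|\nabla\nu|^2$ once the $\circo h$-part is routed through the $|\nabla\nu|^2$-part of Lemma~\ref{Bochner Q}. Corollary~\ref{cor2 lb (nabla A-)-}, with $\theta^2 = 14/15$, then absorbs this remainder into $\theta^2|(\nabla A^-)^-|^2$.

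\textbf{Bookkeeping and the main obstacle.} The factor $(1-\sigma)$ must be accounted for; taking $\sigma\le 1/30$ leaves slack. After absorption we retain at least $(1-\theta^2)|(\nabla A^-)^-|^2 = \frac1{15}|(\nabla A^-)^-|^2$, plus a fraction of $|\langle\nabla A^-,\nu\rangle|^2$. To compare these with the full $|\nabla A^-|^2$, note that $|\langle\nabla A^-,\nu\rangle| = |\langle\nabla\nu,A^-\rangle| \le |A^-||\nabla\nu|$, which is controlled by leftover $|\nabla\nu|^2$ terms. The numbers should then produce $\frac1{29}|\nabla A^-|^2$.

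The main obstacle is arranging the Young's inequality weights precisely. The $\sqrt Q$ and $|\circo h|$ pieces must combine into exactly the expression in Corollary~\ref{cor2 lb (nabla A-)-}, and the interaction between $\sigma$, $\theta$ and the constant $\frac1{29}$ must leave enough room.
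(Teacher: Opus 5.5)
There is a genuine gap in your treatment of the first cross-term. It comes from a sign error. Differentiating $h_{ij}=\langle A_{ij},\nu\rangle$ gives $\nabla_k h_{ij}=S_{kij}+\langle A_{ij},\nabla_k\nu\rangle=S_{kij}+\langle A^-_{ij},\nabla_k\nu\rangle$, with a \emph{plus} sign. Equivalently, $U-\nabla\circo{h}=\langle\nabla A^-,\nu\rangle=-\langle A^-,\nabla\nu\rangle$. Hence
\[
-2\langle\nabla h\otimes\nu,\nabla A^-\rangle=2U_{kij}\langle\nabla_k\nu,A^-_{ij}\rangle+2|\langle\nabla A^-,\nu\rangle|^2 ,
\]
so the square you propose to discard is a bad term. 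It is also twice the good piece $-|\langle\nabla A^-,\nu\rangle|^2$ supplied by the Bochner term, so a net $+|\langle\nabla A^-,\nu\rangle|^2$ survives and must be absorbed. The paper absorbs it with the $|\nabla\nu|^2$-part of Lemma~\ref{Bochner Q}. Since $|\langle\nabla A^-,\nu\rangle|=|\langle A^-,\nabla\nu\rangle|\le|A^-||\nabla\nu|$,
\[
(1-\sigma)\frac{|A^-|^2}{Q}\cdot\frac{5n-8}{n+2}\big(Q+|\circo{h}|^2+|A^-|^2\big)|\nabla\nu|^2\ge(1-\sigma)\frac{5n-8}{n+2}|\langle\nabla A^-,\nu\rangle|^2 ,
\]
and $(1-\sigma)\frac{5n-8}{n+2}\ge\frac{29}{30}\cdot\frac{17}{7}>1+\frac{1}{29}$ for $n\ge5$.

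This uses up that part of Lemma~\ref{Bochner Q}, so it is not available for the ``routing'' of the $\circo{h}$-piece you describe. It could not serve that purpose anyway: it carries a factor $|A^-|^2$, whereas the Young remainder does not. Fortunately no routing is needed. Take your bound $2|A^-||\nabla\nu||U|(1+C_n|\circo{h}|/|H|)$ with $C_n=\sqrt{n(n+2)/(2(n-1))}$, apply Young with $\gamma=(1-\sigma)\frac{5n-8}{6(n-1)}$, and use $|H|\ge\sqrt{3n}\sqrt{Q}$. This gives the remainder $\frac{1}{(1-\sigma)(5n-8)}\big(\sqrt{6(n-1)}\sqrt{Q}+\sqrt{n+2}|\circo{h}|\big)^2|\nabla\nu|^2$ directly, and Corollary~\ref{cor2 lb (nabla A-)-} absorbs it entirely.

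Your final bookkeeping is also off. Because of the factor $(1-\sigma)^{-1}$ in that remainder, what survives is $\big(1-\frac{14}{15}(1-\sigma)^{-1}\big)|(\nabla A^-)^-|^2$. At $\sigma=\frac{1}{30}$ this is exactly $\frac{1}{29}|(\nabla A^-)^-|^2$, not $\frac{1}{15}$, so there is no slack. Together with the coefficient $1-(1-\sigma)\frac{5n-8}{n+2}\le-\frac{1}{29}$ on $|\langle\nabla A^-,\nu\rangle|^2$, this yields $G\le-\frac{1}{29}|\nabla A^-|^2$.
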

\begin{proof}
We begin by estimating the two cross terms appearing in $G$. These are 
\[
- 2\langle \nabla h \otimes \nu, \nabla A^-\rangle \qquad \text{and} \qquad 2\frac{h_{ij}}{|H|}\langle \nabla |H| \otimes \nu, \nabla A^-_{ij}\rangle.
\]
Since $\nabla_i A_{jk}^-$ is traceless in $(j,k)$ we have
\[
-2\langle \nabla h \otimes \nu, \nabla A^-\rangle = -2\nabla_i\circo{h}_{jk}\langle \nu, \nabla_i A^-_{jk}\rangle.
\]
Inserting $U = \nabla \circo{h} + \langle \nabla A^-, \nu\rangle$, and using $\langle \nu, A^-\rangle = 0$ to express
\[
\langle \nabla A^-, \nu \rangle = -\langle A^-, \nabla \nu\rangle,
\]
this becomes
\begin{align*}
-2\langle \nabla h \otimes \nu, \nabla A^-\rangle =2U_{ijk} \langle \nabla_i \nu, A^-_{jk}\rangle + 2|\langle \nabla A^-, \nu\rangle|^2.
\end{align*}
Using the Cauchy--Schwarz inequality, we obtain
\[
-2\langle \nabla h \otimes \nu, \nabla A^-\rangle \leq 2|A^-||U| |\nabla \nu|  + 2|\langle \nabla A^-, \nu\rangle|^2.
\]
Again by Cauchy--Schwarz, the other cross term satisfies
\begin{align*}
2\frac{h_{ij}}{|H|}\langle \nabla |H| \otimes \nu, \nabla A^-_{ij}\rangle &= -2\frac{\circo{h}_{ij}}{|H|}\nabla_k|H| \langle \nabla_k \nu, A^-_{ij}\rangle\\
&\leq 2\frac{|A^-|}{|H|}|\nabla |H|||\circo{h}||\nabla \nu|.
\end{align*}
After inserting these inequalities into the definition of $G$, and decomposing
\[
|\nabla A^-|^2 = |\langle \nabla A^-, \nu\rangle |^2 + |(\nabla A^-)^-|^2,
\]
we arrive at 
\begin{align*}
G &\leq |\langle \nabla A^-, \nu\rangle |^2 - |(\nabla A^-)^-|^2 + 2|U| |A^-||\nabla \nu|  + 2\frac{|A^-|}{|H|}|\nabla |H|||\circo{h}||\nabla \nu|\\
&\qquad - (1-\sigma)\frac{|A^-|^2}{Q}(|\nabla A|^2 - c|\nabla H|^2).
\end{align*}

In the next step we use the Bochner term coming from $Q$, that is $|\nabla A|^2 - c|\nabla H|^2$, to absorb the component $|\langle \nabla A^-, \nu\rangle |^2$ and as much of the two cross-terms as possible. First note that because of Lemma~\ref{Kato projections} and $\sqrt{3n}\sqrt{Q} \leq |H|$ (which requires $c \leq \frac{4}{3n}$) we have
\[
2\frac{|A^-|}{|H|}|\nabla |H|||\circo{h}||\nabla \nu| \leq 2\sqrt{\frac{n+2}{6(n-1)}} \frac{|A^-|}{\sqrt{Q}}|U||\circo{h}||\nabla \nu|.
\]
Combining this with $2xy \leq \gamma x^2 + \gamma^{-1} y^2$, we find that the two cross-terms satisfy the bound
\begin{align*}
&2|U| |A^-||\nabla \nu| + 2\frac{|A^-|}{|H|}|\nabla |H|||\circo{h}||\nabla \nu|\\
&\qquad \leq 2\frac{|A^-|}{\sqrt{Q}}|U|\bigg(\sqrt{Q} + \sqrt{\frac{n+2}{6(n-1)}}|\circo{h}|\bigg)|\nabla \nu|\\
&\qquad \leq \gamma \frac{|A^-|^2}{Q}|U|^2 + \gamma^{-1}\bigg(\sqrt{Q} + \sqrt{\frac{n+2}{6(n-1)}}|\circo{h}|\bigg)^2|\nabla \nu|^2,
\end{align*}
where we are free to choose the parameter $\gamma$. In order to maximally exploit the lower bound
\begin{align*}
\frac{|A^-|^2}{Q}(|\nabla A|^2 - c|\nabla H|^2) &\geq \frac{5n-8}{6(n-1)}\frac{|A^-|^2}{Q}|U|^2 + \frac{5n-8}{n+2}|A^-|^2|\nabla \nu|^2\\
&\geq \frac{5n-8}{6(n-1)}\frac{|A^-|^2}{Q}|U|^2 + \frac{5n-8}{n+2}|\langle \nabla A^-, \nu\rangle|^2
\end{align*}
which is an immediate consequence of Lemma~\ref{Bochner Q}, we choose $\gamma = (1-\sigma) \frac{5n-8}{6(n-1)}$. Doing so, we arrive at the estimate
\begin{align*}
G &\leq \bigg(1 - (1-\sigma)\frac{5n-8}{n+2}\bigg)|\langle \nabla A^-, \nu\rangle |^2 - |(\nabla A^-)^-|^2\\
&\qquad + \frac{1}{(1-\sigma)(5n-8)}\bigg(\sqrt{6(n-1)}\sqrt{Q} + \sqrt{n+2}|\circo{h}|\bigg)^2|\nabla \nu|^2.   
\end{align*}

Finally, we use the term $|(\nabla A^-)^-|^2$ to absorb the nonnegative term on the last line of the previous inequality. This is the step which uses the sharp Kato-type estimate from Proposition~\ref{lb (nabla A-)-}, via Corollary~\ref{cor2 lb (nabla A-)-}. Indeed, Corollary~\ref{cor2 lb (nabla A-)-} implies
\[
\frac{1}{(5n-8)}\bigg(\sqrt{6(n-1)}\sqrt{Q} + \sqrt{n+2}|\circo{h}|\bigg)^2|\nabla \nu|^2 - \frac{14}{15}|(\nabla A^-)^-|^2 \leq 0.
\]
Inserting this above, we see that 
\begin{align*}
G &\leq \bigg(1 - (1-\sigma)\frac{5n-8}{n+2}\bigg)|\langle \nabla A^-, \nu\rangle |^2 - \bigg(1-\frac{14}{15}(1-\sigma)^{-1}\bigg)|(\nabla A^-)^-|^2.  
\end{align*}
Both coefficients on the right are at most $-1/29$ if $n \geq 5$ and $\sigma \leq 1/30$. This completes the proof. 
\end{proof}

We now combine these pieces to establish Theorem~\ref{main}. 

\begin{proof}[Proof of Theorem~\ref{main}]
Suppose $\sigma$ has been fixed in $(0, 1/30]$. We recall, from the discussion at the beginning of this section, that
\begin{align*}
(\partial_t - \Delta) P &\leq \frac{1}{Q^{1-\sigma}}(2R + 2G)-\sigma(1-\sigma)P\frac{|\nabla Q|^2}{Q^{2}} + \frac{2}{Q^{1-\sigma}}\langle\nabla Q^{1-\sigma}, \nabla P\rangle.
\end{align*}
Since $R \leq 0$ by Proposition~\ref{planarity reaction}, and $G \leq 0$ by Proposition~\ref{planarity gradient}, the claim follows as a consequence of the parabolic maximum principle. 
\end{proof}

\section{Surgery}\label{sec surgery}

In this final section, we describe the modifications that need to be made in \cite{Lynch--Nguyen} in order to establish Theorem~\ref{surgery}. First, in the definition of the surgery class in \cite[Subsection~2.2]{Lynch--Nguyen}, the pinching constant $c_n$ should be set to $\frac{4}{3n}$ in dimensions $n \in \{5,6\}$. To carry out the proof of \cite[Theorem~5.1]{Lynch--Nguyen} under this more general pinching hypothesis, one must simply substitute Theorem~\ref{main} of the present article for Naff's version of the planarity estimate. Let us also note that the requirement on $\sigma$ in \cite[Theorem~5.1]{Lynch--Nguyen} can now be replaced by $\sigma \in (0,1/30]$ for all $n \geq 5$. The setup of \cite[Theorem~5.3]{Lynch--Nguyen} also needs to be modified---the proof there still works provided $c$ lies in the range $[\frac{1}{2}(\frac{4}{3n} + \frac{1}{n-1}), \frac{4}{3n}]$ in dimensions $n \in \{5,6\}$. The rest of the analysis in \cite{Lynch--Nguyen} goes through without change.

\end{document}